\documentclass[a4paper, 12pt, titlepage, fleqn]{article}
\usepackage{times}
\usepackage{amsthm}
\newtheorem{thm}{Theorem}
\newtheorem{cor}[thm]{Corollary}
\newtheorem{conj}{Conjecture}
\newtheorem{lemma}[thm]{Lemma}

\setlength{\textwidth}{15cm}
\setlength{\oddsidemargin}{0cm}
\setlength{\evensidemargin}{0cm}
\setlength{\voffset}{0cm}
\setlength{\hoffset}{0.5cm}
\setlength{\topmargin}{1cm}
\setlength{\headheight}{1cm}
\setlength{\headsep}{0cm}
\setlength{\textheight}{20cm}
\setlength{\footskip}{2.3cm}
\setlength{\parindent}{0pt}

\usepackage{amssymb,amsmath}

\DeclareMathOperator{\F}{\mathbb{F}}
\DeclareMathOperator{\A}{\alpha}

\DeclareMathOperator{\Tr}{Tr}

\begin{document}
\baselineskip=16.3pt 
\parskip=14pt

\begin{center}
\section*{On a Family of Twisted Trace Curves over Finite Fields, and Fibonacci Numbers}

\bigskip

{\large 
Robin Chapman \\
Department of Mathematics\\
University of Exeter\\
Exeter, EX4 4QF, UK\\
\bigskip\bigskip
Gary McGuire \\
School of Mathematics and Statistics\\
University College Dublin\\
Ireland}
\end{center}

%\fbox{kkkkkkkkl}

\bigskip

\begin{center}
 In memory of Robin Chapman who sadly passed away during the course of this work. The results have been written
up by the second author and he is responsible for any errors that may be found here.
\end{center}

\bigskip

\subsection*{Abstract}

We present some results about the number of rational points on a certain family 
of curves defined over a finite field.
In a small number of cases the curves have more rational points than expected.
Fibonacci numbers make an appearance, as do cyclotomic polynomials.

\newpage
 
\section{Introduction}

Let $p$ be a prime number and let $q=p^r$ where $r$ is a positive integer.
Throughout this paper $\F_q$ will denote the finite field with $q$ elements.

Let $n\ge 4$ be a positive integer, and 
let  $d$ be a divisor of $n$ with $1<d<n$.
Let $C=C_{q,n,d}$ be the projective plane curve defined over $\F_q$ by the 
homogenization of the affine equation
\begin{equation}\label{curve1}
y+y^q+y^{q^2}+\cdots + y^{q^{n-2}}+y^{q^{n-1}}=x+x^q+x^{q^2}+\cdots + x^{q^{d-2}}+x^{q^{d-1}+q^d-1}.
\end{equation}
It is straightforward to show that  the genus of $C$
is $g(C)=(q^{n-1}-1)(q^{d-1}+q^d-2)/2$, and that $C$ has one
singular point which is the unique point at infinity on the curve.

 Let $\Tr_{q^n:q}$ denote the trace function from  $\F_{q^n}$ to $\F_q$, which is
 \[
 \Tr_{q^n:q} (x)=x+x^q+x^{q^2}+\cdots + x^{q^{n-2}}+x^{q^{n-1}}.
 \]
 The reader may have noticed that the lefthand side of \eqref{curve1} is a trace,
 and the righthand side is almost a trace.
The equation of $C=C_{q,n,d}$ can be written
\begin{equation}\label{curve2}
 \Tr_{q^n:q} (y)=x+x^q+x^{q^2}+\cdots + x^{q^{d-2}}+x^{q^{d-1}+q^d-1}.
\end{equation}
We introduce a notation for the righthand side; let
\begin{equation}\label{rhs}
R_d(x):=x+x^q+x^{q^2}+\cdots + x^{q^{d-2}}+x^{q^{d-1}+q^d-1}.
\end{equation}
The curve equation \eqref{curve1} of $C$ may  be written
\begin{equation}\label{curve3}
 \Tr_{q^n:q} (y)=R_d(x).
\end{equation}
Let $\alpha = x^{q^d-1}$.
We think of $R_d(x)$ as the trace map from $\F_{q^d}$ with the last term twisted by $\alpha$:
\[
R_d(x)=x+x^q+x^{q^2}+\cdots + x^{q^{d-2}}+\A x^{q^{d-1}}.
\]
This is why we may call $C$  a twisted trace curve.
Note that $\A=1$ if and only if $x\in \F_{q^d}^*$ if and only if $R_d(x)=\Tr_{q^d:q} (x)$.

Let $C(\F_{q^n})$ denote the set of $\F_{q^n}$-rational points of $C$.
 In this paper we will present some results about $\#C(\F_{q^n})$. There are a few surprises.
 
 For background on finite fields we refer the reader to Lidl and Niederreiter \cite{lidl}.
 
\section{First Estimate and Relation with Subfield}

\begin{lemma}\label{first}
\[
\#C(\F_{q^n}) = 1+q^{n-1}\cdot \bigl(
\text{number of $x\in \F_{q^n}$ such that  $R_d(x) \in \F_q$}\bigr).
\]
\end{lemma}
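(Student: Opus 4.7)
The plan is to decompose the $\F_{q^n}$-rational points of $C$ into the affine points on the curve \eqref{curve3} and the points at infinity, handle the latter using the statement in the introduction, and then use the elementary properties of the trace map to count the former.

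First I would use the fact, already quoted from the introduction, that $C$ has a unique singular point at infinity; in particular the projective closure contributes exactly one point at infinity to $C(\F_{q^n})$. This accounts for the additive $+1$ in the claimed formula, and reduces the problem to counting pairs $(x,y)\in \F_{q^n}\times \F_{q^n}$ satisfying $\Tr_{q^n:q}(y)=R_d(x)$.

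Next I would fix $x\in\F_{q^n}$ and count the number of $y\in\F_{q^n}$ with $\Tr_{q^n:q}(y)=R_d(x)$. The map $\Tr_{q^n:q}\colon\F_{q^n}\to \F_{q^n}$ is $\F_q$-linear, its image is contained in $\F_q$ (indeed equals $\F_q$, since the trace is surjective onto $\F_q$), and its kernel has $\F_q$-dimension $n-1$, hence size $q^{n-1}$. Consequently, for a given $x$, the equation $\Tr_{q^n:q}(y)=R_d(x)$ has either $0$ solutions $y\in\F_{q^n}$ (when $R_d(x)\notin \F_q$) or exactly $q^{n-1}$ solutions (when $R_d(x)\in \F_q$). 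Summing over $x\in\F_{q^n}$ gives
\[
\#\{(x,y)\in\F_{q^n}^2 : \Tr_{q^n:q}(y)=R_d(x)\} \;=\; q^{n-1}\cdot \#\{x\in\F_{q^n} : R_d(x)\in \F_q\},
\]
and adding the contribution of $1$ from the unique point at infinity yields the formula. There is really no main obstacle here; the only point that requires care is checking that the point at infinity is indeed $\F_q$-rational (so that it is counted in $C(\F_{q^n})$ for every $n$), which follows from its uniqueness together with the fact that any finite Galois orbit of size one is defined over the base field.
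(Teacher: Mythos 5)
Your proposal is correct and follows essentially the same route as the paper: the paper likewise counts, for each $x$ with $R_d(x)\in\F_q$, the $q^{n-1}$ solutions $y$ of $\Tr_{q^n:q}(y)=R_d(x)$ (using surjectivity of the trace, equivalently the size of its kernel), notes there are no solutions when $R_d(x)\notin\F_q$, and adds one for the unique point at infinity. Your extra remark on the rationality of the point at infinity is a harmless elaboration of the same argument.
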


\begin{proof}
For any $x\in \F_{q^n}$ such that  $R_d(x) \in \F_q$,
there are $q^{n-1}$ values of $y$ satisfying  \eqref{curve3}, because the trace is  surjective.
Thus each such $x$ contributes $q^{n-1}$ $\F_{q^n}$-rational points.
Conversely, for any $x\in \F_{q^n}$ such that $R_d(x) \notin \F_q$,
there are no $\F_{q^n}$-rational points with that value of $x$ as its $x$-coordinate.
We add one for the point at infinity.
\end{proof}

Remark: Because $R_d(ax)=aR_d(x)$ for $a\in \F_q$, the set
of $x\in \F_{q^n}$ such that  $R_d(x) \in \F_q$ is closed under
scalar multiplication from $\F_q$.

Since $d$ divides $n$ the field $\F_{q^n}$ has a subfield $\F_{q^d}$.
This subfield gives us a number of points on the curve:

\begin{lemma}\label{subf}
$\#C(\F_{q^n}) \ge 1+q^{n-1+d}$.   %and $f_A (x)$ is divisible by $x^{q^d}-x$.
\end{lemma}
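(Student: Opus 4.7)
The plan is to reduce the inequality to a counting statement via Lemma \ref{first} and then produce the required count from the subfield $\F_{q^d}$.

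By Lemma \ref{first}, writing $N = \#\{x \in \F_{q^n} : R_d(x) \in \F_q\}$, it suffices to show $N \ge q^d$, since then
\[
\#C(\F_{q^n}) = 1 + q^{n-1} N \ge 1 + q^{n-1+d}.
\]
So the whole task is to exhibit $q^d$ values of $x \in \F_{q^n}$ for which $R_d(x)$ already lies in $\F_q$.

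The natural candidates are the elements of the subfield $\F_{q^d} \subseteq \F_{q^n}$ (which is available because $d \mid n$). For any $x \in \F_{q^d}^*$ we have $\A = x^{q^d - 1} = 1$, and the remark following the definition of $R_d$ already records that in this case $R_d(x) = \Tr_{q^d:q}(x)$, which lies in $\F_q$ by definition of the trace from $\F_{q^d}$. The only element of $\F_{q^d}$ not covered is $x = 0$, but $R_d(0) = 0 \in \F_q$ trivially. Hence every $x \in \F_{q^d}$ contributes, giving $N \ge |\F_{q^d}| = q^d$.

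There is no real obstacle here: the content is packaged in the identity $R_d(x) = \Tr_{q^d:q}(x)$ on $\F_{q^d}$, which has already been observed, and the surjectivity of $\Tr_{q^n:q}$ used in Lemma \ref{first} supplies the factor $q^{n-1}$. Combining these two inputs yields the claimed bound immediately.
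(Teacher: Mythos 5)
Your proof is correct and follows essentially the same route as the paper: use the subfield $\F_{q^d}$, note that $\A = x^{q^d-1} = 1$ there so $R_d(x) = \Tr_{q^d:q}(x) \in \F_q$, and conclude via Lemma \ref{first}. Your explicit handling of $x = 0$ is a small tidy addition the paper leaves implicit.
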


\begin{proof}
Suppose $x\in \F_{q^d}$ is nonzero.
Then $x^{q^d-1}=1$ and
$R_d(x)=\Tr_{q^d:q} (x)$.
So the righthand side of \eqref{curve3}  lies in $\F_q$ for any $x\in \F_{q^d}$.
By Lemma \ref{first} we get $\#C(\F_{q^n}) \ge 1+q^{n-1+d}$.
%This also shows that $f_A(x)$ is divisible by $x^{q^d}-x$.
\end{proof}

To determine if equality holds in Lemma \ref{subf}, we 
must determine whether there are any $x\in \F_{q^n}\setminus \F_{q^d}$ with
$R_d(x) \in \F_q$.
For each such $x$ we obtain another $q^{n-1}$ rational points.
We will refer to such rational points as `bonus points'.
It is surprising that bonus points sometimes exist.
For many choices of $q, n, d$  there are no bonus points, 
and equality holds in Lemma \ref{subf}.
For example, when $n=6$ and $d=3$
we will show that there are no  bonus points for any $q$.
We will also find
some values of $q, n, d$ where there are a number of unexpected bonus points.
For example, when $n=4$ and $d=2$ we will show that
there are bonus points if and only if $q \equiv 3 \text{ mod } 5$.
%When $n=10$ and $d=5$ we will show that there are bonus points if and only if $q \equiv 8 \text{ mod } 11$.

In the remainder of this article we present some results of our investigation into which values of
$q, n, d$ bring bonus points, and which values do not.
We also attempt to find the exact number of bonus points. 

\section{General $d$}

As always in this paper, let  $x\in \F_{q^n}$ and let $d$ be a divisor of $n$. 
Consider again $R_d(x)=x+x^q+x^{q^2}+\cdots + x^{q^{d-2}}+x^{q^{d-1}+q^d-1}$.
Let $\alpha = x^{q^d-1}$.
We think of $R_d(x)$ as the trace map from $\F_{q^d}$ twisted by $\alpha$:
\[
R_d(x)=x+x^q+x^{q^2}+\cdots + x^{q^{d-2}}+\A x^{q^{d-1}}.
\]
Note that $\A=1$ if and only if $x\in \F_{q^d}^*$ if and only if $R_d(x)=\Tr_{q^d:q} (x)$.

\begin{thm}\label{basicrel}
Suppose $x\in \F_{q^n}$, let $\alpha = x^{q^d-1}$ and assume $\A \not= 0,1$. 
Then $R_d(x)\in \F_q$ if and only if
\begin{equation*}
(1-\A^{q+1})^{q^d-1}=\A^{q^{d-1}-1} 
(1-\A)^{q^d-1}.
\end{equation*}
\end{thm}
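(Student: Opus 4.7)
The plan is to convert the condition $R_d(x)\in\F_q$ into the Frobenius-fixed form $R_d(x)^q=R_d(x)$, exploit the identity $x^{q^d}=\alpha\cdot x$ (which is just the definition of $\alpha$) to close up the exponents, and finally raise the resulting relation to the $(q^d-1)$-th power to eliminate $x$ in favour of $\alpha$.

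First I would compute $R_d(x)^q$. Writing $R_d(x)=x+x^q+\cdots+x^{q^{d-2}}+\alpha x^{q^{d-1}}$ and applying Frobenius term by term shifts every monomial to the next power of $q$; the last term becomes $\alpha^q x^{q^d}$, and the substitution $x^{q^d}=\alpha x$ converts it into $\alpha^{q+1}x$. Subtracting $R_d(x)$ collapses the interior terms $x^q,x^{q^2},\dots,x^{q^{d-2}}$ telescopically and leaves
\[
R_d(x)^q-R_d(x)\;=\;(1-\alpha)\,x^{q^{d-1}}\;-\;(1-\alpha^{q+1})\,x.
\]
Hence $R_d(x)\in\F_q$ is equivalent to $(1-\alpha^{q+1})\,x=(1-\alpha)\,x^{q^{d-1}}$; since $\alpha\neq 0,1$ (and so $x\neq 0$), this rearranges to
\[
x^{q^{d-1}-1}\;=\;\frac{1-\alpha^{q+1}}{1-\alpha}.
\]

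To reach the statement I would raise both sides to the $(q^d-1)$-th power. The left-hand side becomes $x^{(q^d-1)(q^{d-1}-1)}=\alpha^{q^{d-1}-1}$ via the defining relation for $\alpha$, and clearing denominators gives precisely
\[
(1-\alpha^{q+1})^{q^d-1}\;=\;\alpha^{q^{d-1}-1}(1-\alpha)^{q^d-1}.
\]
This establishes the ``only if'' direction without difficulty.

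The converse is the step I expect to be the main obstacle. Raising to the $(q^d-1)$-th power has kernel $\F_{q^d}^*$, so the theorem's identity only forces the ratio
\[
W\;:=\;\frac{1-\alpha^{q+1}}{(1-\alpha)\,x^{q^{d-1}-1}}
\]
to lie in $\F_{q^d}^*$, not to equal $1$. To pin $W=1$ I would look for a second relation that $W$ must satisfy: applying Frobenius to the defining equation and using $\alpha=x^{q^d-1}$ again gives further identities linking $W$ to its conjugates, and computing the norm $N_{\F_{q^n}/\F_{q^d}}(W)$ (or equivalently iterating $x\mapsto x^{q^d}$) should cut down the $q^d-1$ possibilities to the single value $W=1$. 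Completing this reduction is the crux of the converse.
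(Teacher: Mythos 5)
Your ``only if'' direction is correct and is in fact shorter than the paper's. The paper also starts from $R_d(x)^q=R_d(x)$ and the resulting relation \eqref{main1}, but then reaches the displayed identity by a longer route: raising \eqref{main1} to the $q$-th power, solving for $x^{q-1}$ (which needs the extra side assumption $\alpha^{q^2+q}\neq 1$), raising that to the power $(q^d-1)/(q-1)$, dividing by its Frobenius shift, and applying the inverse of Frobenius (equations \eqref{main3}--\eqref{main8}). Your route --- raise $x^{q^{d-1}-1}=(1-\alpha^{q+1})/(1-\alpha)$ directly to the power $q^d-1$ and use $x^{q^d-1}=\alpha$ --- produces the same identity in one step and avoids that intermediate assumption.

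On the converse, your diagnosis that the kernel $\F_{q^d}^*$ of the $(q^d-1)$-st power map is the obstruction is exactly right, but the repair you sketch (forcing $W=1$ by a norm computation) cannot succeed, because $W=1$ is genuinely not implied. The displayed equation depends only on $\alpha$, whereas the condition $R_d(x)\in\F_q$ is not invariant under $x\mapsto cx$ with $c\in\F_{q^d}^*$, which leaves $\alpha$ unchanged: comparing \eqref{main1} for $x_0$ and for $cx_0$ (both sides being nonzero when $\alpha\neq 0,1$) shows $R_d(cx_0)\in\F_q$ if and only if $c^{q^{d-1}}=c$, i.e.\ $c\in\F_q$. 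So whenever some $x_0$ with $\alpha\neq 0,1$ and $R_d(x_0)\in\F_q$ exists (e.g.\ $d=2$, $n=4$, $q\equiv 3\pmod 5$ by the paper's later results), taking $c\in\F_{q^d}^*\setminus\F_q^*$ gives an $x=cx_0$ for which the stated equation holds for $\alpha$ but $R_d(x)\notin\F_q$, and then $W=c^{1-q^{d-1}}\neq 1$; no norm argument can rule this out. You should not feel obliged to close this gap: the paper's own proof establishes only the ``only if'' implication and never reverses the argument, and in the later counting (Theorem \ref{basicrel2}) the authors do not invoke the converse but instead use \eqref{main4} together with Hilbert 90 to produce, for each admissible $\alpha$, the $q-1$ values of $x$ with $R_d(x)\in\F_q$. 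Read as a necessary condition on $\alpha$ --- which is how the theorem is actually used --- your argument covers everything the paper's proof does, more economically.
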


\begin{proof}
If $R_d(x)\in \F_q$ then $R_d(x)=R_d(x)^q$ which yields
\[
x+\alpha x^{q^{d-1}}=x^{q^{d-1}}+\alpha^{q+1} x
\]
and so
\begin{equation}\label{main1}
x(1-\A^{q+1})=x^{q^{d-1}} (1-\A).
\end{equation}
Assuming $\A \not= 1$, and $x\not= 0$, dividing both sides by $x$ and $1-\A$ gives
\begin{equation}\label{main2}
x^{q^{d-1}-1}=\frac{1-\A^{q+1}}{1-\A}.
\end{equation}
Raising \eqref{main1} to the power of $q$ gives
\begin{equation}\label{main3}
x^q(1-\A^{q^2+q})=\A x (1-\A^q)
\end{equation}
and again 
assuming $\A^{q^2+q} \not= 1$, and $x\not= 0$, dividing both sides by $x$ and 
$1-\A^{q^2+q}$ gives
\begin{equation}\label{main4}
x^{q-1}=\frac{\A(1-\A^{q})}{1-\A^{q^2+q}}.
\end{equation}
Raising this to the power of $\frac{q^d-1}{q-1}=q^{d-1}+\cdots +q+1$ 
will give $\A$ on the lefthand side.
Dividing both sides by this $\A$ (and clearing denominators) gives
\begin{equation}\label{main5}
\prod_{i=0}^{d-1} (1-\A^{q^2+q})^{q^i}=\A^{q^{d-1}+\cdots +q}
\prod_{i=0}^{d-1} (1-\A^{q})^{q^i}.
\end{equation}
Raising this to the power of $q$ gives
\begin{equation}\label{main6}
\prod_{i=0}^{d-1} (1-\A^{q^2+q})^{q^{i+1}}=\A^{q^{d}+\cdots +q^2}
\prod_{i=0}^{d-1} (1-\A^{q})^{q^{i+1}}.
\end{equation}
Dividing \eqref{main5} by \eqref{main6} gives
\begin{equation}\label{main7}
\frac{1-\A^{q^2+q}}{(1-\A^{q^2+q})^{q^d}}=\A^{q-q^d} 
\frac{1-\A^{q}}{(1-\A^{q})^{q^d}}.
\end{equation}
Applying the inverse of Frobenius to \eqref{main7} gives
\begin{equation}\label{main8}
\frac{1-\A^{q+1}}{(1-\A^{q+1})^{q^d}}=\A^{1-q^{d-1}} 
\frac{1-\A}{(1-\A)^{q^d}}.
\end{equation}
Inverting this gives the stated equation.
\end{proof}

Remarks: 

1. Note that Theorem \ref{basicrel}  holds for any $d$ and does not use $n$.

2. Because $\alpha = x^{q^d-1}$ and $x\in \F_{q^n}$, we also have
\begin{equation}\label{defa}
\A^{(q^n-1)/(q^d-1)}=1
\end{equation}
which is an equation that does use $n$.

3.  For $x\in \F_{q^n}$ the norm of $x$ is 
$N_{q^n:q} (x)=x^{(q^n-1)/(q-1)}$. We note that  $\A$ has norm 1.

4. We have two equations to work with, \eqref{main8} 
and  \eqref{defa}.  Sometimes other equations in the proof are useful instead of  \eqref{main8},
as we will see later.

5. It is easy to verify  that \eqref{main8}
is automatically satisfied if we impose the relation $\A^{q+2}=1$,
regardless of the value of $d$.
Whether the converse is true is an interesting question. 
We conjecture that it is true when $d=2$.

\begin{conj}\label{c1}
When $d=2$, equation 
\eqref{main8} implies $\A^{q+2}=1$.
\end{conj}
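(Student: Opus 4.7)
The approach is to reduce equation \eqref{main8} for $d=2$ to a tractable polynomial identity and then extract the relation $\alpha^{q+2}=1$. Set $\beta := (1-\alpha^{q+1})/(1-\alpha)$, which is well-defined since $\alpha\neq 1$; using $q^2-1 = (q+1)(q-1)$, the equation becomes $(\beta^{q+1}/\alpha)^{q-1} = 1$, so there is some $c \in \F_q^*$ with $\beta^{q+1} = c\alpha$. Observe from \eqref{main2} that whenever $\alpha$ arises from an $x$ with $R_2(x)\in \F_q$ we have $\beta = x^{q-1}$ and hence $\beta^{q+1} = x^{q^2-1} = \alpha$, so $c=1$ automatically in the setting of interest. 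I would proceed on this branch and aim to show $\alpha^{q+2}=1$.

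Expanding $\beta^{q+1} = \beta\cdot\beta^q$ via $\beta^q = (1-\alpha^{q^2+q})/(1-\alpha^q)$, valid in positive characteristic, gives the key identity
\[
(1-\alpha^{q+1})(1-\alpha^{q^2+q}) = \alpha(1-\alpha)(1-\alpha^q).
\]
Writing $u := \alpha^{q+2}$, the relations $\alpha^{q+1} = u/\alpha$, $\alpha^{q^2+q} = (\alpha^{q+1})^q = u^{q-1}\alpha^2$, and $\alpha^{q^2+2q+1} = (\alpha^{q+1})^{q+1} = u^q\alpha$ collapse this identity to the quadratic in $\alpha$
\[
(1-u^{q-1})\alpha^2 - (1-u^q)\alpha + (1-u) = 0,
\]
all of whose coefficients vanish precisely when $u=1$. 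Factoring out the common factor $(1-u)$ (valid when $u\neq 1$) leaves $(1+u+\cdots+u^{q-2})\alpha^2 - (1+u+\cdots+u^{q-1})\alpha + 1 = 0$, a reduced quadratic which one must then show has no admissible solutions.

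The principal obstacle is this final step: re-substituting $u=\alpha^{q+2}$ turns the reduced quadratic into a single polynomial in $\alpha$ of degree $(q+1)^2$, and the conjecture demands that its only roots, apart from the excluded $\alpha\in\{0,1\}$, be $(q+2)$-th roots of unity. I would attempt two routes. First, a direct factorisation of the explicit polynomial
\[
1 - \alpha + \alpha^2 - \alpha^{q+2} - \alpha^{q^2+q} + \alpha^{q^2+2q+1}
\]
as a unit times $(\alpha-1)^a(\alpha^{q+2}-1)^b$, by computing the multiplicity of $\alpha=1$ (which a Taylor expansion in characteristic $p\mid q$ suggests is $q+2$), showing the other $(q+2)$-th roots of unity are simple, and matching degrees. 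Second, a Galois-theoretic argument: any hypothetical extra root $\alpha_0$ and its Frobenius conjugates must all satisfy the same quadratic relation in $u$, and this constrained orbit structure may force $u=1$. A separate small argument is also needed to handle the $c\neq 1$ branch of the first paragraph, showing that the spurious $\alpha\in\F_q\setminus\{0,1\}$ it admits do not arise from genuine $x$. Identifying the algebraic identity that makes $d=2$ special is where I expect the real difficulty to lie, and is presumably why the statement is still conjectural.
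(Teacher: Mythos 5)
You should first note that the paper offers no proof of this statement to compare against: it is left as an open conjecture (Remark~5 and the Final Remarks), and is only settled in special cases --- for $n=4$ inside the proof of the corollary to Theorem~\ref{d2}, using $\alpha^{q^2+1}=1$ together with $\alpha^{q^2-q-1}=1$, and for $n=6$ via the resultant computation behind Theorem~\ref{factors3} --- and both of those arguments use the auxiliary relation \eqref{defa} in an essential way. Your reduction itself is sound: the $c=1$ identity $(1-\alpha^{q+1})(1-\alpha^{q^2+q})=\alpha(1-\alpha)(1-\alpha^q)$ is precisely the paper's equation \eqref{n6d21} (whose derivation from \eqref{main2} does not use $n$), and the rewriting as $(1-u^{q-1})\alpha^2-(1-u^q)\alpha+(1-u)=0$ with $u=\alpha^{q+2}$ checks out.

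The genuine gap is not merely that the last step is unfinished (which you admit); the step your first route aims for is false, so the plan as stated cannot close. The polynomial $1-\alpha+\alpha^2-\alpha^{q+2}-\alpha^{q^2+q}+\alpha^{q^2+2q+1}$ is not a unit times $(\alpha-1)^a(\alpha^{q+2}-1)^b$: for $q=3$ one computes over $\F_3$ that it factors as $(\alpha-1)^5\,\Phi_5(\alpha)\,V(\alpha)$ with $V(\alpha)=\alpha^7+\alpha^6+\alpha^5+2\alpha^4+\alpha^3+2\alpha^2+2$, and $V$ is coprime to $\alpha(\alpha^4-1)(\alpha^5-1)$; hence its roots satisfy your $c=1$ identity, and therefore \eqref{main8}, with $\alpha\neq 0,1$, $\alpha^{q+1}\neq 1$, yet $\alpha^{q+2}\neq 1$. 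The $c\neq 1$ branch is equally problematic for the literal statement: every $\alpha\in\F_q\setminus\{0,\pm 1\}$ satisfies \eqref{main8} trivially (e.g.\ $q=5$, $\alpha=2$ gives $\alpha^{q+2}=3\neq 1$), and such $\alpha$ cannot simply be dismissed unless one adds the hypothesis that $\alpha$ actually arises as $x^{q^2-1}$ for $x\in\F_{q^n}$. The conclusion is that \eqref{main8} alone does not force $\alpha^{q+2}=1$; any viable argument must import the constraint \eqref{defa} (equivalently, the norm/subgroup condition coming from $\alpha=x^{q^2-1}$ with $x\in\F_{q^n}$), i.e.\ it must target Conjecture~\ref{c2} or the bonus-point statement rather than Conjecture~\ref{c1} as literally stated --- which is exactly how the paper's $n=4$ and $n=6$ arguments succeed. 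Your second, Galois-orbit route could in principle absorb \eqref{defa}, but as written neither route proves the claim, and the factorization route should be abandoned.
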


Even a weaker conjecture would be useful:

\begin{conj}\label{c2}
When $d=2$, equations
\eqref{main8} and  \eqref{defa}  imply $\A^{q+2}=1$.
\end{conj}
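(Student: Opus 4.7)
The plan is to convert \eqref{main8} for $d=2$ into a clean multiplicative identity in $\A$, isolate $\A^{q+2}-1$ as a polynomial factor, and then use \eqref{defa} to exclude the remaining zeros. Writing $h(\A):=1+\A+\A^2+\cdots+\A^q=(\A^{q+1}-1)/(\A-1)$, and using the Frobenius-image identity $\A^{q^2+q}-1=(\A^q-1)h(\A)^q$, equation \eqref{main8} with $d=2$ collapses to $h(\A)^{q^2-1}=\A^{q-1}$. This is equivalent to $(h(\A)^{q+1}/\A)^{q-1}=1$, i.e.\ $h(\A)^{q+1}=c\A$ for some $c=c(\A)\in\F_q^*$. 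Accordingly $F(\A):=h(\A)^{q^2-1}-\A^{q-1}\in\F_q[\A]$ factors as $\prod_{c\in\F_q^*}F_c(\A)$ with $F_c(\A):=h(\A)^{q+1}-c\A$.

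A direct check shows that every $(q+2)$-th root of unity is a zero of $F_1$: if $\A^{q+2}=1$ then $\A^{q+1}=\A^{-1}$, so $h(\A)=-\A^{-1}$, whence $h(\A)^{q+1}=(-1)^{q+1}\A^{-(q+1)}=\A$ (both when $q$ is odd, since $(-1)^{q+1}=1$, and in characteristic $2$). Hence $\A^{q+2}-1$ divides $F(\A)$ in $\F_q[\A]$ (at least when $p\nmid q+2$; the case $p\mid q+2$ needs a minor adjustment using the radical), and writing $F(\A)=(\A^{q+2}-1)G(\A)$ reduces the conjecture to the gcd statement
\[
\gcd\!\bigl(G(\A),\ \A^{(q^n-1)/(q^2-1)}-1\bigr)=1 \quad\text{in}\quad \F_q[\A],
\]
i.e.\ that $G$ has no root in the norm-$1$ subgroup of $\F_{q^n}^*$ over $\F_{q^2}^*$.

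The main obstacle comes in two flavours. First, one must rule out roots of $G_1:=F_1/(\A^{q+2}-1)$ in the norm-$1$ subgroup; this is the difficulty one would already face with the stronger equation $h(\A)^{q+1}=\A$, which is what one gets from \eqref{main1} directly before the information-loss of raising to a power in the derivation of \eqref{main8}. Second, one must show that, for each $c\in\F_q^*\setminus\{1\}$, the polynomial $F_c(\A)$ has no root in the norm-$1$ subgroup either---this extra task appears precisely because \eqref{main8} is weaker than $h(\A)^{q+1}=\A$. In either sub-case the natural strategy is to factor the relevant polynomial into cyclotomic polynomials $\Phi_m(\A)$ over $\F_q$ and verify that none of the occurring $m$'s divides $(q^n-1)/(q^2-1)$; for small $n$ (e.g.\ $n=4$, where the norm-$1$ subgroup is the group of $(q^2+1)$-th roots of unity) this cyclotomic check can be done by explicit computation, but a uniform argument in $n$ is the essential difficulty and is presumably why the statement is posed as a conjecture.
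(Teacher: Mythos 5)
You are attempting a statement that the paper itself leaves open: Conjecture~\ref{c2} is nowhere proved in the paper, which only settles special cases (for $n=4$, after Theorem~\ref{d2}, via the $d=n/2$ relations $\A^{q^2-q-1}=1$ and $\A^{q^2+1}=1$; for $n=6$, via the resultant computation culminating in Theorem~\ref{factors3}). Your algebra is sound as far as it goes: with $h(\A)=(\A^{q+1}-1)/(\A-1)$, equation \eqref{main8} for $d=2$ is indeed equivalent to $h(\A)^{q^2-1}=\A^{q-1}$, the factorization $h(\A)^{q^2-1}-\A^{q-1}=\prod_{c\in\F_q^*}\bigl(h(\A)^{q+1}-c\A\bigr)$ is correct, and your verification that every $(q+2)$-th root of unity kills the factor $c=1$ is exactly the paper's Remark~5. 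Your side observation that \eqref{main1}, combined with $x^{q^2-1}=\A$, yields the stronger relation $h(\A)^{q+1}=\A$ is also correct and genuinely useful: it is precisely equation \eqref{n6d21} of the paper's $n=6$ analysis, and it explains why the factors with $c\neq 1$ are an artifact of the information lost in deriving \eqref{main8}.

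However, what you have produced is a reformulation, not a proof. The entire content of the conjecture is the claim that the only elements of the kernel of the norm from $\F_{q^n}^*$ to $\F_{q^2}^*$ satisfying your equation are $(q+2)$-th roots of unity, and you leave this as an unproven ``gcd statement,'' proposing a cyclotomic-factorization check that you do not carry out for any $n$. Moreover, as stated the proposed check is not even a finite computation: $h(\A)^{q+1}-c\A$ has degree $q^2+q$ with $q$ varying, so ``factor into cyclotomic polynomials over $\F_q$ and see which orders divide $(q^n-1)/(q^2-1)$'' depends on $q$; the paper's $n=6$ argument avoids exactly this by treating $\A,\A^q,\A^{q^2},\dots$ as independent variables and taking resultants of polynomials of bounded, $q$-independent degree, which is what makes its cyclotomic factorization uniform in $q$. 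Two smaller points: the statement you actually need is weaker than $\gcd\bigl(G(\A),\A^{(q^n-1)/(q^2-1)}-1\bigr)=1$ (common roots need only be $(q+2)$-th roots of unity, as your own characteristic-$7$-type degeneracies in the paper's Lemma~\ref{factors2} remarks suggest can occur), and the case $p=2$ (the only case with $p\mid q+2$) still needs the separability adjustment you only allude to. In short, the reduction is a reasonable way to organize the problem, but the essential step is missing, so this does not establish the conjecture.
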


\section{The Case $d=n/2$}

\begin{thm}\label{basicrel2}
Assume $n$ is even and $d=n/2$.
If $\A \not= 0,1$ and $R_d(x)\in \F_q$ then
$\A^{D}=1$
where $D=\gcd (q^d+1,q^2-q-1)$.

Furthermore, $\#C(\F_{q^n}) = 1+q^{n-1+d}+(D-1)(q-1)q^{n-1}$.
\end{thm}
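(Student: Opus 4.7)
My plan is first to derive the constraint $\alpha^D=1$ using Theorem \ref{basicrel}, and then to count the bonus $x$'s. Since $d = n/2$, Remark 2 after Theorem \ref{basicrel} gives $\alpha^{q^d+1}=1$, equivalently $\alpha^{q^d} = \alpha^{-1}$. In characteristic $p$ we have $(1-\beta)^{q^d-1} = (1-\beta^{q^d})/(1-\beta)$; substituting this into the equation of Theorem \ref{basicrel} with $\beta = \alpha^{q+1}$ on the left and $\beta = \alpha$ on the right, each side collapses to a monomial in $\alpha$, and a short calculation shows the whole equation becomes $\alpha^{q^{d-1}+q-1} = 1$. Multiplying this exponent by $q$ produces $q^d + q^2 - q = (q^d+1) + (q^2-q-1)$, so combining with $\alpha^{q^d+1}=1$ yields $\alpha^{q^2-q-1} = 1$. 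Hence $\alpha^D = 1$.

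For the counting, Lemma \ref{subf} accounts for the subfield contribution $1 + q^{n-1+d}$, and by Lemma \ref{first} each remaining bonus $x \in \F_{q^n}\setminus\F_{q^d}$ contributes $q^{n-1}$ rational points. A bonus $x$ has $\alpha \neq 1$ with $\alpha^D = 1$, and there are $D-1$ such $\alpha$ in $\F_{q^n}^*$ since $D \mid q^d+1 \mid q^n-1$. For a fixed such $\alpha$, the condition $R_d(x) \in \F_q$ rewrites via \eqref{main1} as $x^{q^{d-1}-1} = c$, where $c = (1-\alpha^{q+1})/(1-\alpha)$ is well-defined and nonzero because $\gcd(D,q+1) = \gcd(q^2-q-1,q+1)=1$ rules out $\alpha^{q+1}=1$. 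Combined with $x^{q^d-1}=\alpha$, this describes a fiber of the group homomorphism $\psi \colon x \mapsto (x^{q^d-1},x^{q^{d-1}-1})$ on $\F_{q^n}^*$, whose kernel is $\F_q^*$ (of order $q-1$); hence each nonempty fiber has size exactly $q-1$.

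The main obstacle is to show that the fiber over $(\alpha,c)$ is actually nonempty for every $\alpha \neq 1$ with $\alpha^D=1$, equivalently that the consistency relation $c^{(q^d-1)/(q-1)} = \alpha^{(q^{d-1}-1)/(q-1)}$ holds. I would verify this by a direct product calculation in characteristic $p$: writing $c^{(q^d-1)/(q-1)}$ as $\prod_{i=0}^{d-1}(1-\alpha^{(q+1)q^i})/\prod_{i=0}^{d-1}(1-\alpha^{q^i})$ and using $\alpha^{q^d}=\alpha^{-1}$ together with $\alpha^{q^2}=\alpha^{q+1}$ (the latter from $\alpha^{q^2-q-1}=1$), the bulk of the factors cancel and the ratio reduces to $\alpha^{-(q+1)}$. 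Matching this with $\alpha^{(q^{d-1}-1)/(q-1)}$ reduces to showing $\alpha^{q^{d-1}+q^2-2}=1$ modulo the factor $q-1$, and indeed $\alpha^{q^{d-1}+q^2-2}=\alpha^{(1-q)+(q+1)-2}=1$ follows from the two standing relations; the factor $q-1$ is removed using $\gcd(\mathrm{ord}(\alpha),q-1)=1$, which comes from $\gcd(D,q-1) = \gcd(q^2-q-1,q-1) = 1$. Granted this, there are $(D-1)(q-1)$ bonus $x$-values, yielding $\#C(\F_{q^n}) = 1+q^{n-1+d}+(D-1)(q-1)q^{n-1}$.
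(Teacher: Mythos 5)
Your proposal is correct, and its first half is essentially the paper's argument: you combine $\alpha^{q^d}=\alpha^{-1}$ (from $d=n/2$) with the relation of Theorem \ref{basicrel} and the identity $(1-\beta)^{q^d-1}=(1-\beta^{q^d})/(1-\beta)$ to collapse everything to $\alpha^{q^{d-1}+q-1}=1$ and hence $\alpha^{q^2-q-1}=1$; the paper does the same collapse one step earlier, applying $\frac{1-x}{1-x^{-1}}=-x$ to \eqref{main7} to get $\alpha^{q^2+q}=\alpha^{2q+1}$. Where you genuinely diverge is the counting step. The paper disposes of it in one sentence, citing \eqref{main4} (so that $x^{q-1}$ is a prescribed norm-one quantity and Hilbert 90 gives $q-1$ solutions), which asserts but does not really verify that for each admissible $\alpha\neq 1$ with $\alpha^D=1$ there actually exist $x$ with $x^{q^d-1}=\alpha$ and $R_d(x)\in\F_q$ (\eqref{main4} is only a consequence of \eqref{main2}, not equivalent to the full system). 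You instead work directly with the equivalent condition \eqref{main1}/\eqref{main2}, set up the homomorphism $\psi(x)=(x^{q^d-1},x^{q^{d-1}-1})$ with kernel $\F_q^*$, and prove nonemptiness of each fiber by checking the consistency relation $c^{(q^d-1)/(q-1)}=\alpha^{(q^{d-1}-1)/(q-1)}$; your computation (the telescoping product giving $\alpha^{-(q+1)}$, the reduction to $\alpha^{q^{d-1}+q^2-2}=1$, and the removal of the factor $q-1$ via $\gcd(q^2-q-1,q-1)=1$) is valid, as is the observation $\gcd(q^2-q-1,q+1)=1$ ensuring $c\neq 0$. (One tiny point: when you first apply the identity with $\beta=\alpha^{q+1}$ you need $\alpha^{q+1}\neq 1$, which at that stage follows directly from the Theorem \ref{basicrel} relation itself, since $\alpha^{q+1}=1$ would force its right side, hence $\alpha\in\{0,1\}$, to vanish.) Your route buys a fully justified exact count of $(D-1)(q-1)$ bonus $x$-values, filling in the existence argument that the paper leaves implicit, at the cost of a somewhat longer verification.
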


\begin{proof}
If we assume $n$ is even and $d=n/2$ 
then \eqref{defa} implies $\A^{q^d+1}=1$ or $\A^{q^d}=\A^{-1}$.
Theorem \ref{basicrel} (equation \eqref{main7}) implies
\begin{equation}\label{main12}
\frac{1-\A^{q^2+q}}{1-\A^{-q^2-q}}=\A^{q+1} 
\frac{1-\A^{q}}{1-\A^{-q}}.
\end{equation}
Using the fact that $\frac{1-x}{1-x^{-1}}=-x$ we get
\begin{equation}\label{main13}
\A^{q^2+q}=\A^{q+1} \cdot \A^q.
\end{equation}
This implies that $\A^{q^2-q-1}=1$.
We also have $\A^{q^d+1}=1$. Therefore  $\A^{\gcd (q^d+1,q^2-q-1)}=1$.

For any $\A$ satisfying $\A^D=1$ we get $q-1$ values of $x$ such
that $R_d(x)\in \F_q$, by Theorem \ref{basicrel} and \eqref{main4} in its proof.
\end{proof}

\begin{cor}\label{nob}
Assume $n$ is even and $d=n/2$.
For any values of $q$ such that\\
$\gcd (q^d+1,q^2-q-1)=1$
there are no bonus points
and $\#C(\F_{q^n}) = 1+q^{n-1+d}$.
\end{cor}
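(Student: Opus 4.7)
The plan is to obtain this as an immediate specialization of Theorem~\ref{basicrel2}. Under the hypothesis $D=\gcd(q^d+1,q^2-q-1)=1$, the counting formula $\#C(\F_{q^n}) = 1+q^{n-1+d}+(D-1)(q-1)q^{n-1}$ proved there collapses, since the correction term $(D-1)(q-1)q^{n-1}$ vanishes. Nothing additional needs to be computed: the equality $\#C(\F_{q^n}) = 1+q^{n-1+d}$ is simply the previous formula with $D=1$ substituted.

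For the \emph{no bonus points} conclusion, I would argue directly from the definition. A bonus point would come from some $x\in\F_{q^n}\setminus\F_{q^d}$ with $R_d(x)\in\F_q$. For such an $x$, the quantity $\A=x^{q^d-1}$ is nonzero (as $x\neq 0$) and not equal to $1$ (as $x\notin\F_{q^d}$), so the hypotheses of Theorem~\ref{basicrel2} are met and $\A^D=1$ is forced. When $D=1$ this yields $\A=1$, contradicting $\A\neq 1$. Hence no such $x$ exists, so the only $x\in\F_{q^n}$ with $R_d(x)\in\F_q$ are the $q^d$ elements of $\F_{q^d}$, and Lemma~\ref{first} re-derives $\#C(\F_{q^n})=1+q^{n-1}\cdot q^d$, consistent with the collapsed formula.

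I do not foresee any genuine obstacle: all the work has already been absorbed into Theorem~\ref{basicrel2}, and the corollary is essentially a reading of what that theorem says when the gcd is trivial. The only point worth emphasizing in writing is the logical equivalence between \emph{the correction term vanishes} and \emph{every $x\in\F_{q^n}$ with $R_d(x)\in\F_q$ already lies in $\F_{q^d}$}, so that the conclusion really matches the informal claim that the subfield contribution of Lemma~\ref{subf} is attained with equality.
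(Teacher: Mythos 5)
Your proposal is correct and matches the paper's intent: the paper gives no separate proof because the corollary is exactly the specialization of Theorem~\ref{basicrel2} to $D=1$, both for the collapsed point count and for the non-existence of bonus points (any $x\in\F_{q^n}\setminus\F_{q^d}$ would give $\A\neq 0,1$ with $\A^D=1$, impossible when $D=1$). Your write-up just makes this reading-off explicit, which is fine.
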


In the next section we will investigate $\gcd  (q^d+1,q^2-q-1)$.
We will see that this gcd is often equal to 1, but not always.

\subsection{The  GCD and Fibonacci Numbers}

Let $q=p^r$ where $p$ is a prime, and let $d\ge 2$ be an integer.
In this section we are going to study
$\gcd (q^d+1,q^2-q-1)$.
Although we are interested in $q$ being a prime power,
in much of the following $q$ can be any positive integer.

Denote the Fibonacci numbers by $(F_d)_{d \geq 1}$ where
$F_1=1$, $F_2=1$, and $F_d=F_{d-1}+F_{d-2}$ for $d\geq 3$.
Denote the Lucas numbers by $(L_d)_{d \geq 1}$ where
$L_1=2$, $L_2=1$, and $L_d=L_{d-1}+L_{d-2}$ for $d\geq 3$.
Recall the identities $F_{2d}=F_d L_d$ and $L_d=F_{d-1}+F_{d+1}$.

Dividing $q^2-q-1$ into $q^d+1$ it is easy to check that
\[
q^d+1=(q^2-q-1)\biggl(\sum_{i=1}^{d-1} F_i q^{d-i-1}\biggr) + F_d\ q + F_{d-1}+1
\]
and hence 
\[
\gcd (q^d+1,q^2-q-1)=\gcd(q^2-q-1, F_d\ q + F_{d-1}+1).
\]

Suppose $t$ is a prime dividing $\gcd(q^2-q-1, F_d\ q + F_{d-1}+1)$.
We can assume $t\geq 3$ because $q^2-q-1$ is always odd.
We would like to characterize such $t$.
Here is one characterization.

\begin{thm}\label{fibdiv1}
Let $d\ge 2$ be an integer.

%Any prime divisor of $\gcd (q^d+1,q^2-q-1)$ will divide either $F_d$ or $F_{d+1}+F_{d-1}+1+(-1)^d$.
Any  divisor of $\gcd (q^d+1,q^2-q-1)$ which is relatively prime to $F_d$
will divide \\ $F_{d+1}+F_{d-1}+1+(-1)^d$.

Conversely, let $t$ be a  divisor of $F_{d+1}+F_{d-1}+1+(-1)^d$
which is relatively prime to $F_d$.
Then $t$ divides $\gcd (q^d+1,q^2-q-1)$
for any $q \equiv - \frac{F_{d-1}+1}{F_d} \pmod{t}$.
\end{thm}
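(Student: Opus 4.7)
The plan is to work directly from the rewriting
\[
\gcd(q^d+1,\,q^2-q-1)=\gcd(q^2-q-1,\,F_d\,q+F_{d-1}+1)
\]
that has already been derived in the paper just before the theorem statement. Once this is in hand, both directions reduce to a single modular computation and one appeal to Cassini's identity $F_{d-1}F_{d+1}-F_d^2=(-1)^d$.

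For the forward direction, I would let $t$ be a divisor of the gcd with $\gcd(t,F_d)=1$. From the displayed equality, $t\mid F_d\,q+F_{d-1}+1$, so (since $F_d$ is invertible mod $t$) we may solve
\[
q\equiv -\frac{F_{d-1}+1}{F_d}\pmod{t}.
\]
Because $t\mid q^2-q-1$, multiplying by $F_d^2$ gives $F_d^2(q^2-q-1)\equiv 0\pmod{t}$, and substituting the value of $F_d q$ turns the left side into a pure Fibonacci expression. The key step is then to simplify
\[
(F_{d-1}+1)^2+F_d(F_{d-1}+1)-F_d^2,
\]
group the first and third terms using $F_{d-1}+F_d=F_{d+1}$ to recognize $F_{d-1}F_{d+1}-F_d^2$, and apply Cassini to collapse this to $(-1)^d$. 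The remaining terms are $2F_{d-1}+F_d+1=F_{d-1}+F_{d+1}+1$, so altogether
\[
F_d^2(q^2-q-1)\equiv F_{d+1}+F_{d-1}+1+(-1)^d\pmod{t}.
\]
Since $\gcd(t,F_d)=1$, this forces $t\mid F_{d+1}+F_{d-1}+1+(-1)^d$, as required.

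For the converse, I would simply reverse the chain. Given $t\mid F_{d+1}+F_{d-1}+1+(-1)^d$ with $\gcd(t,F_d)=1$ and $q\equiv -(F_{d-1}+1)/F_d\pmod{t}$, the congruence $t\mid F_d\,q+F_{d-1}+1$ is immediate by clearing denominators, and the same computation (read right-to-left) shows $F_d^2(q^2-q-1)\equiv 0\pmod{t}$, hence $t\mid q^2-q-1$. Combining these with the displayed gcd identity yields $t\mid\gcd(q^d+1,q^2-q-1)$.

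The only nontrivial step is the algebraic simplification, and the only place one must be careful is in tracking the hypothesis $\gcd(t,F_d)=1$: it is used both to invert $F_d$ when solving for $q$ and to cancel $F_d^2$ at the end. I do not anticipate a real obstacle; the whole argument is essentially Cassini's identity dressed up, and the symmetric form $F_{d+1}+F_{d-1}+1+(-1)^d$ appearing in the theorem is exactly what falls out of that identity after the substitution.
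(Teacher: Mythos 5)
Your proof is correct and follows essentially the same route as the paper: reduce via the division identity to $\gcd(q^2-q-1,\,F_d q+F_{d-1}+1)$, substitute $F_d q\equiv-(F_{d-1}+1)\pmod t$ into $q^2-q-1$, and collapse the result to $F_{d+1}+F_{d-1}+1+(-1)^d$ using Cassini, then reverse the chain for the converse. Multiplying through by $F_d^2$ rather than working with the fraction $-(F_{d-1}+1)/F_d$ is only a cosmetic difference from the paper's clearing of denominators.
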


\begin{proof}
Suppose $t$ is a divisor of $\gcd(q^2-q-1, F_d\ q + F_{d-1}+1)$, and 
suppose that $\gcd (t,F_d)=1$.  Then
\[
q \equiv - \frac{F_{d-1}+1}{F_d} \pmod{t}.
\]
Since $t$ also divides $q^2-q-1$ we get $q^2\equiv q+1 \pmod{t}$ so
\[
\biggl( \frac{F_{d-1}+1}{F_d} \biggr)^2 \equiv 1-\frac{F_{d-1}+1}{F_d} \pmod{t}
\]
and clearing denominators gives
\[
(F_{d-1}+1)^2 \equiv F_d^2 -F_d(F_{d-1}+1)\pmod{t}.
\]
Using the well known Cassini identity $F_{d+1}F_{d-1}-F_d^2=(-1)^d$ we get
\[
F_{d+1}+F_{d-1}+1+(-1)^d \equiv 0 \pmod{t}.
\]
Reverse the argument to prove the converse.
\end{proof}

Remarks.

1. Since $F_{d+1}+F_{d-1} =L_d$, where $L_d$ is the $d$-th Lucas number,
$F_{d+1}+F_{d-1}+1+(-1)^d \equiv 0 \pmod{t}$  can be rewritten
\[
\begin{cases}
L_d \equiv 0 \pmod{t}\ \text{ if $d$ is odd}\\ L_d+2\equiv 0 \pmod{t} \ \text{ if $d$ is even. }
\end{cases}
\]
So prime divisors of Lucas numbers are relevant, when $d$ is odd.

2. For a given $d$ and $t$
there are infinitely many primes $q \equiv - \frac{F_{d-1}+1}{F_d} \pmod{t}$
by Dirichlet's theorem.

Example: When $d=25$, we find that
$F_{d+1}+F_{d-1}+1+(-1)^d=167761=(11)(101)(151)$.
When $q=19$ we get $\gcd (q^d+1,q^2-q-1)=11$.
When $q=23$ we get  $\gcd (q^d+1,q^2-q-1)=101$.
When $q=179$ we get $\gcd (q^d+1,q^2-q-1)=151$.

\subsection{The Case  $d$ odd}

For certain values of $d$ we can now classify when there are bonus points, and how many there are.
We give some sample results.

\begin{thm}\label{d5}
When $d=5$ and $n=10$  we have 
$\#C(\F_{q^n}) = 1+q^{n-1+d}+(G-1)(q-1)q^{n-1}$ where
\[
G=\begin{cases}
11\ \text{ if $q\equiv 8 \pmod{11}$}\\ 
1 \ \text{ otherwise. }
\end{cases}
\]
\end{thm}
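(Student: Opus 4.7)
The plan is to apply Theorem~\ref{basicrel2} (which already gives the formula with $D=\gcd(q^d+1,q^2-q-1)$ in place of $G$) together with Theorem~\ref{fibdiv1}, and just compute what $D$ can be when $d=5$.

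First I would recall from Theorem~\ref{basicrel2} that
\[
\#C(\F_{q^n}) = 1+q^{n-1+d}+(D-1)(q-1)q^{n-1},\qquad D=\gcd(q^d+1,q^2-q-1),
\]
so the task reduces to showing that, for $d=5$, this gcd equals $11$ when $q\equiv 8\pmod{11}$ and equals $1$ otherwise.

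Next I would invoke Theorem~\ref{fibdiv1} with $d=5$. Computing $F_4=3$, $F_5=5$, $F_6=8$, the quantity $F_{d+1}+F_{d-1}+1+(-1)^d$ becomes $8+3+1-1=11$ (equivalently $L_5=11$, matching Remark~1 after that theorem for odd $d$). Thus any divisor of $D$ coprime to $F_5=5$ divides $11$. To finish the classification I need to rule out that $5$ itself contributes to $D$. For this I would check the two conditions separately: $5\mid q^2-q-1$ forces $q\equiv 3\pmod 5$ (direct case check), while $5\mid q^5+1$ forces $q\equiv -1\equiv 4\pmod 5$ (by Fermat $q^5\equiv q$). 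These are incompatible, so $\gcd(D,5)=1$ and therefore $D\in\{1,11\}$.

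Finally, the converse direction of Theorem~\ref{fibdiv1} tells me precisely when $D=11$: we need
\[
q \equiv -\frac{F_{d-1}+1}{F_d}=-\frac{4}{5}\pmod{11}.
\]
Since $5\cdot 9\equiv 1\pmod{11}$, this gives $q\equiv -4\cdot 9\equiv -36\equiv 8\pmod{11}$, as claimed. There is no real obstacle here; the only step requiring a small side calculation is confirming $\gcd(D,F_5)=1$, and even that is an easy two-line check modulo~$5$. Substituting $D=G$ back into the formula from Theorem~\ref{basicrel2} completes the proof.
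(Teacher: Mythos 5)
Your proposal is correct and follows essentially the same route as the paper: reduce to computing $D=\gcd(q^5+1,q^2-q-1)$ via Theorem~\ref{basicrel2}, use Theorem~\ref{fibdiv1} with $F_4=3$, $F_5=5$, $F_6+F_4=11$ to force $D\in\{1,11\}$, and solve $q\equiv -4/5\equiv 8\pmod{11}$ for when $D=11$. The only difference is that you spell out the mod-$5$ incompatibility ($q\equiv 3$ vs.\ $q\equiv 4\pmod 5$) that the paper dismisses as ``easy to check,'' which is a welcome bit of extra detail.
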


\begin{proof}
Suppose $d=5$.
Then $F_5=5$ and $F_6+F_4=11$.
It is easy to check that 5 cannot divide $\gcd (q^5+1,q^2-q-1)$.
By Theorem \ref{fibdiv1}, and because 11 is a prime number, we get that 
$\gcd (q^5+1,q^2-q-1)$ is either 1 or 11. 

Let $t=11$. By Theorem \ref{fibdiv1}, take $q \equiv - \frac{F_{d-1}+1}{F_d} \pmod{t} =-(3+1)/5 \pmod{11} =8$.
So when $q \equiv 8 \pmod{11}$ we have 11 divides
$\gcd (q^5+1,q^2-q-1)$, and when $q \not\equiv 8 \pmod{11}$ we have 
$\gcd (q^5+1,q^2-q-1)=1$.
By Theorem \ref{basicrel2} the solutions to $\A^{11}=1$ apart from $\A=1$
will give bonus points.
\end{proof}

\begin{thm}\label{d7}
When $d=7$ and $n=14$  we have 
$\#C(\F_{q^n}) = 1+q^{n-1+d}+(G-1)(q-1)q^{n-1}$ where
\[
G=\begin{cases}
29\ \text{ if $q\equiv 6 \pmod{29}$}\\ 
1 \ \text{ otherwise. }
\end{cases}
\]
\end{thm}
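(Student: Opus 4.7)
My plan is to mimic the proof of Theorem \ref{d5} step by step, replacing $d=5$ by $d=7$ and running the Fibonacci/Lucas calculation. The key input is Theorem \ref{fibdiv1} combined with Theorem \ref{basicrel2}, so that everything reduces to computing $G=\gcd(q^7+1,q^2-q-1)$ as a function of $q$.

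First I would record the relevant Fibonacci and Lucas values: $F_7=13$, $F_6=8$, $F_8=21$, so $L_7=F_6+F_8=29$. Since $d=7$ is odd, the quantity $F_{d+1}+F_{d-1}+1+(-1)^d$ appearing in Theorem \ref{fibdiv1} is exactly $L_7=29$. Therefore any prime divisor $t$ of $G$ that is coprime to $F_7=13$ must divide $29$, and since $29$ is prime, the $13$-coprime part of $G$ is either $1$ or $29$.

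Next I would rule out $13 \mid G$. By Theorem \ref{fibdiv1} (or just directly), $13 \mid G$ would force $13 \mid q^2-q-1$, i.e.\ the congruence $q^2\equiv q+1 \pmod{13}$ is solvable. Its discriminant is $5$, so this reduces to the routine check that $5$ is not a quadratic residue modulo $13$ (the nonzero squares mod $13$ are $\{1,3,4,9,10,12\}$). The conclusion is that $G\in\{1,29\}$. To locate when $G=29$, the second part of Theorem \ref{fibdiv1} says $29\mid G$ precisely when $q\equiv -(F_6+1)/F_7 \equiv -9\cdot 13^{-1} \pmod{29}$; since $13\cdot 9\equiv 1\pmod{29}$, this evaluates to $-81\equiv 6 \pmod{29}$.

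Finally I would plug into Theorem \ref{basicrel2} to get the stated formula for $\#C(\F_{q^n})$, with $G=29$ in the congruence case and $G=1$ otherwise. The only step with any real content is the quadratic nonresidue check ruling out $F_7=13$; everything else is bookkeeping analogous to the $d=5$ case, so I do not expect a genuine obstacle.
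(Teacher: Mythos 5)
Your proposal is correct and follows essentially the same route as the paper: Theorem \ref{fibdiv1} with $F_7=13$ and $F_8+F_6=29$, ruling out the prime $13$ (your quadratic-residue check mod $13$ is a fine way to do what the paper calls "easy to check"), computing $q\equiv -(F_6+1)/F_7\equiv 6\pmod{29}$, and then feeding $G=\gcd(q^7+1,q^2-q-1)\in\{1,29\}$ into Theorem \ref{basicrel2}. The arithmetic ($13\cdot 9\equiv 1$, $-81\equiv 6\pmod{29}$) checks out, so there is nothing to add.
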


\begin{proof}
Suppose $d=7$.
Then $F_7=13$ and $F_6+F_8=29$. 
It is easy to check that 13 cannot divide $\gcd (q^7+1,q^2-q-1)$.
By Theorem \ref{fibdiv1}, and because 29 is a prime number, we get that 
$\gcd (q^7+1,q^2-q-1)$ is either 1 or 29. Let $t=29$.
The rest is similar to the proof of Theorem \ref{d5}.
\end{proof}

\begin{thm}
When $d=9$ and $n=18$ there are at least $18(q-1)q^{n-1}$ bonus points on $C$
whenever $q \equiv 15 \pmod{19}$.
\end{thm}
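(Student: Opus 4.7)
The plan is to apply Theorem \ref{basicrel2} directly, since $d=9$ and $n=18$ satisfy $d=n/2$. That result gives the exact bonus-point count as $(D-1)(q-1)q^{n-1}$ with $D=\gcd(q^9+1,q^2-q-1)$, so it suffices to exhibit a divisor of size $19$ inside $D$ whenever $q\equiv 15\pmod{19}$; this forces $D\ge 19$ and hence at least $18(q-1)q^{n-1}$ bonus points.

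To produce such a divisor I would invoke Theorem \ref{fibdiv1} with $t=19$. First I record the needed Fibonacci data: $F_8=21$, $F_9=34$, $F_{10}=55$, so $L_9=F_8+F_{10}=76=2^2\cdot 19$. Since $d=9$ is odd, Remark~1 following Theorem \ref{fibdiv1} simplifies the quantity $F_{d+1}+F_{d-1}+1+(-1)^d$ to $L_9=76$, whose unique odd prime divisor is $19$. Moreover $\gcd(19,F_9)=\gcd(19,34)=1$, so $t=19$ is a legitimate choice in Theorem \ref{fibdiv1}.

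The converse direction of Theorem \ref{fibdiv1} then says $19\mid D$ for any $q\equiv -(F_8+1)/F_9\pmod{19}$. A short computation modulo $19$ confirms this residue class is $15$: reducing, $F_8+1=22\equiv 3$ and $F_9=34\equiv 15\pmod{19}$, and $15^{-1}\equiv 14\pmod{19}$ (since $15\cdot 14=210=11\cdot 19+1$), so $-3\cdot 14=-42\equiv 15\pmod{19}$. Combined with the previous paragraph, this completes the argument.

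I do not anticipate a serious obstacle; the statement is designed to be a clean application of Theorems \ref{basicrel2} and \ref{fibdiv1}. The only reason the conclusion is phrased as an inequality rather than an equality is that $D$ might conceivably acquire an extra factor from $\gcd(D,F_9)$---the sole candidate being $17$, since $q^2-q-1$ is odd---and ruling this in or out lies outside the scope of Theorem \ref{fibdiv1}. For the asserted lower bound, however, the single prime divisor $19$ of $D$ already suffices.
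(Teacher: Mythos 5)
Your proposal is correct and follows essentially the same route as the paper: verify that $19$ divides $F_{d+1}+F_{d-1}+1+(-1)^d=76$ with $\gcd(19,F_9)=1$, apply the converse part of Theorem \ref{fibdiv1} to get $19\mid\gcd(q^9+1,q^2-q-1)$ exactly when $q\equiv-(F_8+1)/F_9\equiv 15\pmod{19}$, and conclude via Theorem \ref{basicrel2}. You are in fact slightly more explicit than the paper (which leaves the appeal to Theorem \ref{basicrel2} and the coprimality check implicit), and your residue computation modulo $19$ matches the paper's.
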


\begin{proof} Let $d=9$.
Then $F_9=34$ and $F_8+F_{10}=76$. Take $t=19$.
Then take $q \equiv - \frac{F_{d-1}+1}{F_d} \pmod{t} =-(21+1)/34 \pmod{19} =15$.
So when $q \equiv 15 \pmod{19}$ we have 19 divides
$\gcd (q^9+1,q^2-q-1)$.
\end{proof}

\begin{thm}
When $d=11$ and $n=22$ there are at least $198(q-1)q^{n-1}$ bonus points on $C$
whenever $q \equiv 138 \pmod{199}$.
\end{thm}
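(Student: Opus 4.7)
The plan is to mirror the proofs of Theorems \ref{d5} and \ref{d7}, combining Theorem \ref{basicrel2} with the converse direction of Theorem \ref{fibdiv1}. First I would compute the two relevant Fibonacci quantities for $d=11$: we have $F_{11}=89$ and, since $d$ is odd, $F_{d+1}+F_{d-1}+1+(-1)^{d}=L_{11}=F_{10}+F_{12}=55+144=199$. Conveniently $199$ is prime, and since $199 \nmid 89$ we have $\gcd(199,F_{11})=1$, so the hypothesis of the converse half of Theorem \ref{fibdiv1} is satisfied with $t=199$.

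Next I would check that the residue dictated by Theorem \ref{fibdiv1} is indeed $138$. Theorem \ref{fibdiv1} requires
\[
q \equiv -\frac{F_{d-1}+1}{F_d} \equiv -\frac{F_{10}+1}{F_{11}} \equiv -\frac{56}{89} \pmod{199}.
\]
A direct calculation confirms $89 \cdot 138 = 12282 = 61\cdot 199 + 143$, so $89\cdot 138 \equiv 143 \equiv -56 \pmod{199}$, hence $138 \equiv -56/89 \pmod{199}$.

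Therefore, for every $q \equiv 138 \pmod{199}$, Theorem \ref{fibdiv1} gives $199 \mid \gcd(q^{11}+1,q^2-q-1)$, i.e.\ $D := \gcd(q^d+1,q^2-q-1) \ge 199$. By Theorem \ref{basicrel2}, each of the $D-1 \ge 198$ solutions $\A\ne 1$ of $\A^D=1$ produces $q-1$ values of $x\in\F_{q^n}\setminus\F_{q^d}$ with $R_d(x)\in\F_q$, and by Lemma \ref{first} each such $x$ contributes $q^{n-1}$ bonus points. Multiplying gives at least $198(q-1)q^{n-1}$ bonus points, as claimed.

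There is no real obstacle here; the argument is a direct specialization of the general framework already built. The only reason the statement is phrased as a lower bound rather than an equality is that, in principle, some prime power of $89$ (the only factor of $F_{11}$) could also sneak into $\gcd(q^{11}+1,q^2-q-1)$ for special $q$; ruling this out would require a short extra check that $89 \nmid q^2-q-1$ for $q\equiv 138 \pmod{199}$, but this is not needed for the stated inequality.
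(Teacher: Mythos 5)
Your proof is correct and follows essentially the same route as the paper: compute $F_{11}=89$ and $F_{10}+F_{12}=199$, apply the converse direction of Theorem \ref{fibdiv1} with $t=199$ and $q\equiv -56/89\equiv 138\pmod{199}$, and then count the bonus points via Theorem \ref{basicrel2}. Your explicit verification that $89\cdot 138\equiv -56\pmod{199}$ and your remark on why the statement is only a lower bound are welcome additions, but the argument is the paper's own.
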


\begin{proof} Let  $d=11$.
Then $F_{11}=89$ and $F_{10}+F_{12}=199$. Take $t=199$.
Then take $q \equiv - \frac{F_{d-1}+1}{F_d} \pmod{t} =-(55+1)/89 \pmod{199} =138$.
So when $q \equiv 138 \pmod{199}$ we have 199 divides
$\gcd (q^{11}+1,q^2-q-1)$.
\end{proof}

\vfill

\subsection{The Case $d$ even}

We remark that $F_{d+1}+F_{d-1} = F_{2d}/F_d$ and it is known
that there is always a prime dividing $F_{2d}$ that does not divide $F_d$.
So when $d$ is odd there will always be a prime 
divisor of $F_{d+1}+F_{d-1}+1+(-1)^d$
which does not divide $F_d$.
This is not true when $d$ is even, as we now prove.

\begin{lemma}
When $d$ is even every prime divisor $\not= 5$ of $F_{d+1}+F_{d-1}+1+(-1)^d$
also divides $F_d$.
%Furthermore, $F_{d+1}+F_{d-1}+1+(-1)^d$ is a square when $d\equiv 0 \pmod{4}$
\end{lemma}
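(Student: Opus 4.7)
The plan is to recognize that when $d$ is even we have $(-1)^d=1$, so the quantity in question simplifies to
\[
F_{d+1}+F_{d-1}+1+(-1)^d = L_d + 2,
\]
using the identity $L_d=F_{d-1}+F_{d+1}$ already noted in the paper. Thus we need to show that every prime $t\ne 5$ dividing $L_d+2$ also divides $F_d$.

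The key ingredient I would use is the standard Lucas–Fibonacci identity
\[
L_d^2 - 5F_d^2 = 4(-1)^d,
\]
which for even $d$ reads $L_d^2-4 = 5F_d^2$. Factoring the left-hand side gives
\[
(L_d-2)(L_d+2)=5F_d^2.
\]
Now if $t\ne 5$ is a prime dividing $L_d+2$, then $t$ divides $5F_d^2$, hence $t\mid F_d^2$, and since $t$ is prime $t\mid F_d$, which is exactly what we want.

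There is essentially no obstacle here beyond invoking the right identity; the whole argument reduces to the factorization $(L_d-2)(L_d+2)=5F_d^2$ valid for even $d$. I would present the identity $L_d^2-5F_d^2=4(-1)^d$ explicitly (it is a standard consequence of Binet's formulas or a simple induction on $d$) so that the reader can verify the single algebraic step. The role of the exceptional prime $5$ is transparent: it is the unique prime that can appear on the left of $(L_d-2)(L_d+2)=5F_d^2$ without being forced into $F_d$.
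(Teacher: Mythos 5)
Your proof is correct, but it takes a cleaner, more uniform route than the paper. The paper, after the same reduction to $L_d+2$, splits into cases according to $d \bmod 4$: for $d\equiv 0 \pmod 4$ it invokes $L_{4n}+2=L_{2n}^2$ to get $L_d+2=L_{d/2}^2$, for $d\equiv 2\pmod 4$ it invokes $L_{2n}+2(-1)^{n+1}=5F_n^2$ to get $L_d+2=5F_{d/2}^2$, and in both cases concludes via $F_{2n}=F_nL_n$ that the prime divisors in question divide $F_d=F_{d/2}L_{d/2}$. You instead use the single identity $L_d^2-5F_d^2=4(-1)^d$, which for even $d$ factors as $(L_d-2)(L_d+2)=5F_d^2$, and the conclusion for a prime $t\neq 5$ dividing $L_d+2$ is immediate. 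Your argument avoids the case split and needs only one identity, so it is shorter and arguably more transparent about why $5$ is the exceptional prime; the paper's version is more precise, since it exhibits $L_d+2$ exactly as $L_{d/2}^2$ or $5F_{d/2}^2$, which pins down not just that the prime divisors divide $F_d$ but that they divide the specific factor $L_{d/2}$ or $F_{d/2}$ (information of the same flavour as the identities used in the following subsection). Either proof is complete; if you include yours, state the identity $L_d^2-5F_d^2=4(-1)^d$ with a one-line justification (Binet or induction), as you propose.
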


\begin{proof}
First use the identity
$F_{d+1}+F_{d-1} =L_d$, where $L_d$ is the $d$-th Lucas number,
to get
$F_{d+1}+F_{d-1}+1+(-1)^d=L_d+2$.
Next use the identity $L_{4n}+2=L_{2n}^2$
to conclude that $L_d+2=L_{d/2}^2$ when $d\equiv 0 \pmod{4}$.
Finally use the identity $F_{2n}=F_n L_n$
to obtain the result when $d\equiv 0 \pmod{4}$.

When $d\equiv 2 \pmod{4}$ we need the identity
$L_{2n}+2(-1)^{n+1}=5F_{n}^2$ which gives
$L_{d}+2=5F_{d/2}^2$ and the result follows again from
$F_{2n}=F_n L_n$.
\end{proof}

\subsection{Cases where GCD is 1}

\begin{lemma}\label{qr}
All prime divisors except $5$ of $q^2-q-1$
are $\equiv \pm 1 \pmod{5}$.
\end{lemma}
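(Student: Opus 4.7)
The plan is to reduce the statement to a quadratic residue question about $5$. Let $t$ be a prime divisor of $q^2-q-1$. First I would dispose of $t=2$: since $q^2-q=q(q-1)$ is always even, $q^2-q-1$ is always odd, so every prime divisor is odd.

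Next, for an odd prime $t \neq 5$ dividing $q^2-q-1$, I would complete the square. Multiplying $q^2-q-1 \equiv 0 \pmod t$ by $4$ gives $4q^2-4q-4 \equiv 0 \pmod t$, which rearranges to
\[
(2q-1)^2 \equiv 5 \pmod{t}.
\]
Since $t$ is odd and $t \neq 5$, the element $2q-1$ is a well-defined nonzero (or possibly zero, but then $5 \equiv 0$, contradicting $t \neq 5$) residue whose square is $5$. Thus $5$ is a quadratic residue modulo $t$.

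Finally, I would apply quadratic reciprocity. Since $5 \equiv 1 \pmod 4$, reciprocity gives $\left(\tfrac{5}{t}\right) = \left(\tfrac{t}{5}\right)$, and the right-hand side equals $+1$ precisely when $t$ is a square mod $5$, i.e.\ when $t \equiv \pm 1 \pmod 5$. Combined with the previous step, any prime $t \neq 5$ dividing $q^2-q-1$ must satisfy $t \equiv \pm 1 \pmod 5$, which is the claim. I do not expect any real obstacle here; the only thing to be careful about is keeping track of the exceptional prime $t=5$, which is exactly the prime allowed by the statement.
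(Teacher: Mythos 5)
Your proof is correct and follows essentially the same route as the paper: the paper invokes quadratic reciprocity to say that $x^2-x-1$ has a root mod $t\neq 5$ iff $t\equiv\pm1\pmod 5$, and you simply spell out the underlying details (completing the square to get $(2q-1)^2\equiv 5\pmod t$, then using $5\equiv 1\pmod 4$ to transfer the residue condition to $t$ mod $5$). No issues.
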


\begin{proof}
By quadratic reciprocity the polynomial $x^2-x-1$ splits modulo a prime $t\not= 5$
if and only if $t \equiv \pm 1 \pmod{5}$.
\end{proof}

\begin{cor}\label{gcd1}
Given $d$, if all prime divisors of $F_d$ and $F_{d+1}+F_{d-1}+1+(-1)^d$  are 
$\equiv \pm 2 \pmod{5}$ then
$\gcd (q^d+1,q^2-q-1)=1$ for all $q$.
\end{cor}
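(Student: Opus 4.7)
The plan is to combine Theorem \ref{fibdiv1} with Lemma \ref{qr} by a short case analysis on a prime $t$ dividing $\gcd(q^d+1,q^2-q-1)$, and derive a contradiction with the residue class of $t$ modulo $5$.

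More explicitly, I would argue by contradiction: suppose the stated gcd is greater than $1$, and let $t$ be any prime divisor of it. There are two cases. Either $t \mid F_d$, or $\gcd(t,F_d)=1$; in the second case Theorem \ref{fibdiv1} forces $t$ to divide $F_{d+1}+F_{d-1}+1+(-1)^d$. The hypothesis of the corollary then says that in either case $t \equiv \pm 2 \pmod{5}$.

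On the other hand, $t$ divides $q^2-q-1$, and Lemma \ref{qr} restricts the prime divisors of $q^2-q-1$ to $t=5$ or $t \equiv \pm 1 \pmod{5}$. Note that $5 \not\equiv \pm 2 \pmod{5}$, so the case $t=5$ is already incompatible with the previous paragraph; and $t \equiv \pm 1 \pmod{5}$ is manifestly incompatible with $t \equiv \pm 2 \pmod{5}$. Either way we reach a contradiction, so no such prime $t$ exists, and $\gcd(q^d+1,q^2-q-1)=1$.

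There is no real obstacle here: the statement is a clean synthesis of the two preceding results, and the only small point to watch is treating the exceptional prime $t=5$ in Lemma \ref{qr} explicitly, which is painless because $5$ itself does not lie in the residue classes $\pm 2 \pmod{5}$ permitted by the hypothesis.
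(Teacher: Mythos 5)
Your proof is correct and is exactly the argument the paper intends: the paper's own proof is just the one-line ``By Theorem \ref{fibdiv1} and Lemma \ref{qr}'', and your case split on whether a prime divisor $t$ of the gcd divides $F_d$ or not, together with the explicit handling of $t=5$, is simply the spelled-out version of that synthesis.
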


\begin{proof}
By Theorem \ref{fibdiv1} and Lemma \ref{qr}.
\end{proof}

Using MAGMA we check that this happens for $d=3,4,8,12,16,24,32$, and more values too.
So, for example, there will never be any bonus points when $d=3$ or 4:

\begin{cor}
For $d=3$ and $n=6$ we have $\#C(\F_{q^n}) = 1+q^{n-1+d}$ for all $q$.\\
For $d=4$ and $n=8$ we have $\#C(\F_{q^n}) = 1+q^{n-1+d}$ for all $q$.
\end{cor}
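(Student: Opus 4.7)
The plan is to derive both statements as immediate consequences of Corollary~\ref{gcd1} combined with Corollary~\ref{nob}, since in both cases $n = 2d$ puts us in the setting of Theorem~\ref{basicrel2}. The whole argument reduces to verifying the prime-divisor hypothesis of Corollary~\ref{gcd1} for $d=3$ and $d=4$.

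First I would observe that for $(n,d) = (6,3)$ and $(n,d) = (8,4)$ we have $d = n/2$, so Corollary~\ref{nob} applies: it will suffice to show $\gcd(q^d+1,\ q^2-q-1) = 1$ for every prime power $q$. By Corollary~\ref{gcd1}, this in turn reduces to checking that every prime divisor of $F_d$ and of $F_{d+1}+F_{d-1}+1+(-1)^d$ lies in the residue class $\pm 2 \pmod 5$.

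Next I would carry out the two small verifications. For $d=3$: $F_3 = 2$ and $F_{d+1}+F_{d-1}+1+(-1)^d = F_4 + F_2 + 1 - 1 = 3 + 1 = 4$. The only prime dividing either of these numbers is $2$, and indeed $2 \equiv 2 \pmod 5$. For $d=4$: $F_4 = 3$ and $F_{d+1}+F_{d-1}+1+(-1)^d = F_5 + F_3 + 1 + 1 = 5 + 2 + 2 = 9$. The only prime dividing either of these is $3$, and $3 \equiv -2 \pmod 5$. In both cases the hypothesis of Corollary~\ref{gcd1} is satisfied.

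There is no real obstacle here; the substance of the argument has already been done in Theorem~\ref{basicrel2}, Corollary~\ref{nob}, Lemma~\ref{qr}, and Corollary~\ref{gcd1}. The only thing to be careful about is the correct interpretation of the hypothesis: one must note that the quantity $F_{d+1}+F_{d-1}+1+(-1)^d$ is the relevant invariant (arising from Theorem~\ref{fibdiv1}) and not, say, $L_d$ alone, so that the parity of $d$ is handled properly. Once these numerical checks are in place, applying Corollary~\ref{nob} gives $\#C(\F_{q^n}) = 1 + q^{n-1+d}$ for all $q$ in both cases, completing the proof.
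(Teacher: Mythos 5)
Your proposal is correct and follows exactly the paper's own route: verify that the prime divisors of $F_d$ and $F_{d+1}+F_{d-1}+1+(-1)^d$ (namely $2$ for $d=3$ and $3$ for $d=4$) are $\equiv\pm2\pmod 5$, then apply Corollary~\ref{gcd1} and Corollary~\ref{nob}. The numerical checks match the paper's, so nothing further is needed.
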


\begin{proof}
When $d=3$ then $F_d=2$ and $F_{d+1}+F_{d-1}+1+(-1)^d=3+1+1-1=4$.
When $d=4$ then $F_d=3$ and $F_{d+1}+F_{d-1}+1+(-1)^d=5+2+1+1=9$.
The result follows from Corollary \ref{gcd1} and Corollary \ref{nob}.
\end{proof}

Compare this to the $d=5$ case in Theorem \ref{d5}.

\subsection{Case that $t$ divides $F_d$}

We only make a few comments.
Suppose $t$ is a prime dividing $\gcd(q^2-q-1, F_d\ q + F_{d-1}+1)$.
and suppose also that $t$ divides $F_d$.
Then $t$ divides $F_{d-1}+1$.
%$\gcd(q^2-q-1, q^d+1)=\gcd(q^2-q-1,  F_{d-1}+1)$.

This can happen when $d$ is even. For example, let $q=79$, $d=30$ and $t=61$.
Then 61 divides $F_{29}+1$ and also divides $79^2-79-1$.
This means that $\gcd (79^{30}+1,79^2-79-1)$ is divisible by 61.
So there are bonus points in this case.

Suppose $d$ is even.

The identity $F_{n+1} L_n = F_{2n+1} + (-1)^n$
implies that $F_{d-1}+1=F_{d/2} L_{d/2 -1}$ when 
$d\equiv 2 \pmod{4}$.

The identity $L_{n+1} F_n = F_{2n+1} - (-1)^n$
implies that $F_{d-1}+1=L_{d/2} F_{d/2 -1}$ when 
$d\equiv 0 \pmod{4}$.

However this does not seem to happen when $d$ is odd. Suppose $d$ is odd. 
We conjecture that the GCD of $F_d$ and $F_{d-1}+1$ is either 1 or 2.
This is surely known already.
%are either relatively prime or have GCD equal to 2.

\newpage

\section{The Case $d=2$}

Assume that $n$ is even and $d=2$.
We have the relation $\A^{(q^n-1)/(q^2-1)}=1$.

\begin{thm}\label{d2}
Assume that $n$ is even and $d=2$.
Furthermore we assume that $\A^{q+2}=1$ (i.e. assume Conjecture 1).
Then $C$ will have
at least $(G-1)\cdot (q-1)\cdot q^{n-1}$ bonus points, where
$G=\gcd (q^n-1,H)$
and $H=\gcd (q+2,(2^n-1)/3)$.
\end{thm}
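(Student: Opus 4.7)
The plan is to count the bonus points by enumerating the admissible values of $\alpha = x^{q^2-1}$ (with $\alpha \neq 0, 1$) and, for each, the corresponding $x \in \F_{q^n}$ giving $R_d(x) \in \F_q$, then multiplying by the $q^{n-1}$ values of $y$ provided by Lemma \ref{first}. Under the assumed Conjecture \ref{c1} and the converse noted in Remark 5, Theorem \ref{basicrel} gives that $R_d(x) \in \F_q$ is equivalent to $\alpha^{q+2}=1$ (for $\alpha \neq 0, 1$). Since $x \in \F_{q^n}^*$ also forces $\alpha^{(q^n-1)/(q^2-1)} = 1$, the admissible $\alpha$'s are precisely the elements of $\F_{q^n}^*$ of order dividing $M := \gcd(q+2,\, (q^n-1)/(q^2-1))$.

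The key simplification is to identify $M$ with the $H$ appearing in the statement. Writing $(q^n-1)/(q^2-1) = 1 + q^2 + q^4 + \cdots + q^{n-2}$ and reducing modulo $q+2$ via $q \equiv -2$, so $q^2 \equiv 4$, the sum collapses to $\sum_{k=0}^{n/2-1} 4^k = (2^n-1)/3 \pmod{q+2}$. Hence $M = \gcd(q+2,\,(2^n-1)/3) = H$, and because $M \mid (q^n-1)/(q^2-1) \mid q^n-1$, the number of admissible $\alpha \in \F_{q^n}^*$ is exactly $M = \gcd(q^n-1, H) = G$.

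For each admissible $\alpha \neq 1$, I would recover $x$ from equation \eqref{main1}. Substituting $\alpha^{q+1} = \alpha^{-1}$ (from $\alpha^{q+2} = 1$) gives $x^{q-1} = (1-\alpha^{-1})/(1-\alpha) = -\alpha^{-1}$. Since $\alpha^{(q^n-1)/(q-1)} = (\alpha^{(q^n-1)/(q^2-1)})^{q+1} = 1$ and $(-1)^{(q^n-1)/(q-1)} = 1$ (as $n$ is even), the element $-\alpha^{-1}$ is a $(q-1)$-th power in $\F_{q^n}^*$, producing exactly $q-1$ solutions $x$. A direct check $x^{q^2-1} = (-\alpha^{-1})^{q+1} = \alpha^{-(q+1)} \cdot (-1)^{q+1} = \alpha$, using $\alpha^{q+2}=1$ and $(-1)^{q+1}=1$ in every characteristic, confirms compatibility with the starting hypothesis. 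Assembling, the total is $(G-1)(q-1)q^{n-1}$ bonus points.

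The main obstacle I anticipate is bookkeeping the degenerate cases in the derivation of Theorem \ref{basicrel}, which required dividing by $1-\alpha$, $1-\alpha^{q+1}$, and $1-\alpha^{q^2+q}$. A short check shows all three vanish only when $\alpha = 1$: for instance $\alpha^{q^2+q} = 1$ combined with $\alpha^{q+2} = 1$ forces $\alpha^{\gcd(q+2,\,q(q+1))} = 1$, and this gcd equals $1$ for odd $q$ and $2$ in characteristic two (where $\alpha^2 = 1$ collapses to $\alpha = 1$). Hence every admissible $\alpha \neq 1$ contributes exactly $q-1$ values of $x$, justifying the lower bound; the ``at least'' in the statement leaves room for any overlooked corner cases.
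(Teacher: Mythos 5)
Your proposal is correct and follows essentially the same route as the paper: you identify the admissible $\alpha$ as the $H$-th (hence $G$-th) roots of unity by reducing $1+q^2+\cdots+q^{n-2}$ modulo $q+2$ (the paper does the same computation on exponents via $\alpha^{q^r}=\alpha^{2^r}$), and then produce $q-1$ values of $x$ for each $\alpha\neq 1$ from $x^{q-1}=-\alpha^{-1}$ via Hilbert 90, exactly as the paper does through \eqref{main4}. Your extra checks (that $-\alpha^{-1}$ has norm $1$, that $x^{q^2-1}=\alpha$, and that the degenerate divisions only occur at $\alpha=1$) are more explicit than the paper's one-line conclusion but do not change the method.
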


\begin{proof}
Taking the $q$-th power of the relation $\A^{q}=\A^{-2}$ 
gives $\A^{q^2}=\A^{-2q}=(\A^q)^{-2}=\A^4$.
Raising this to the power of $q^2$ gives 
$\A^{q^4}=(\A^{q^2})^{4}=\A^{16}$.
By induction we get $\A^{q^r}=\A^{2^r}$ for any even $r$.
Then
\[
1=\A^{(q^n-1)/(q^2-1)}=\A^{1+q^2+q^4+\cdots +q^{n-2}}=
\A^{1+2^2+2^4+\cdots +2^{n-2}}=\A^{(2^n-1)/(2^2-1)}=\A^{(2^n-1)/3}.
\]
For every $\A$ satisfying  $\A^{q+2}=1$ and $\A^{(2^n-1)/3}=1$ we have $\A^H=1$.
For every $\A\in \F_{q^n}$ satisfying  $\A^H=1$ we have $\A^G=1$, and conversely.

By  \eqref{main4}  for each $\A\not= 1$ we obtain $q-1$ values of $x$ with $R_d(x)\in \F_q$.
\end{proof}

\subsection{The Case  $d=2$ and $n=4$}

\begin{cor}
For $d=2$ and $n=4$ we have $\#C(\F_{q^n}) = 1+q^{n-1+d}+4(q-1)q^{n-1}$ for all $q\equiv 3 \pmod{5}$,
and $\#C(\F_{q^n}) = 1+q^{n-1+d}$ otherwise.
\end{cor}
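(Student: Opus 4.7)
The plan is to compute $|S| := \#\{x \in \F_{q^4} : R_d(x) \in \F_q\}$ exactly and then invoke Lemma~\ref{first}. I would partition $S$ by the value of $\A = x^{q^2-1}$. The ``subfield'' cases $\A = 0$ (that is, $x=0$) and $\A = 1$ (that is, $x \in \F_{q^2}^*$) contribute $1 + (q^2-1) = q^2$ to $|S|$ and account for the $1 + q^{n-1+d}$ term of Lemma~\ref{subf}; the remaining task is to count $x$'s with $\A \neq 0, 1$, to which Theorem~\ref{basicrel} applies.

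For $n = 4$ and $d = 2$, equation \eqref{defa} gives $\A^{q^2+1} = 1$, i.e.\ $\A^{q^2} = \A^{-1}$. Substituting this into \eqref{main8} and applying the identity $(1-y^{-1})/(1-y) = -y^{-1}$ to each side, exactly as in the proof of Theorem~\ref{basicrel2}, the whole relation collapses to $\A^{2q-1} = 1$. So the admissible $\A$'s form a subgroup of order dividing $\gcd(2q-1,\, q^2+1)$. From $4(q^2+1) = (2q-1)(2q+1) + 5$ and the fact that $2q-1$ is odd, this gcd divides $5$ and equals $5$ iff $q \equiv 3 \pmod{5}$. Thus for $q \not\equiv 3 \pmod{5}$ the only possibility is $\A = 1$, giving no bonus $x$'s; while for $q \equiv 3 \pmod{5}$ exactly four non-trivial admissible values of $\A$ arise, namely the non-trivial 5th roots of unity, which do lie in $\F_{q^4}^*$ since $5 \mid q^4-1$.

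For each such $\A$, equation \eqref{main1} forces $x^{q-1} = (1-\A^{q+1})/(1-\A) =: \gamma$, and this either has $q-1$ solutions in $\F_{q^4}^*$ or none. Solvability is equivalent to $\gamma^{(q^4-1)/(q-1)} = 1$, and since $(q^4-1)/(q-1) = (q+1)(q^2+1)$ and $\A^{q^2+1}=1$, it reduces to the single check $\gamma^{q+1} = \A$. Expanded, this identity reads $(1-\A^{q+1})(1-\A^{q-1}) = \A(1-\A)(1-\A^q)$, and after using $\A^{2q} = \A$ it collapses to $1 + \A^2 = \A^{q-1} + \A^{q+2}$, which follows at once from $\A^5 = 1$ together with $q-1 \equiv 2$ and $q+2 \equiv 0 \pmod{5}$. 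As a byproduct this also confirms Conjecture~\ref{c2} in the present case, since $5 \mid q+2$ forces $\A^{q+2} = 1$.

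Combining everything, $|S| = q^2 + 4(q-1)$ when $q \equiv 3 \pmod{5}$ and $|S| = q^2$ otherwise, so Lemma~\ref{first} yields $\#C(\F_{q^4}) = 1 + q^3|S|$, which is the stated formula. The main technical step I anticipate is the consistency verification $\gamma^{q+1} = \A$ in the third paragraph, where the specific modulus $5$ and the congruence $q \equiv 3 \pmod{5}$ interact with the shape of \eqref{main1}; the rest is a routine combination of Theorem~\ref{basicrel}, Lemma~\ref{first}, and the Euclidean identity for $\gcd(2q-1,\, q^2+1)$.
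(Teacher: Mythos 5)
Your argument is correct, and it establishes the exact count (not just a lower bound), so it does the job. It lives in the same circle of ideas as the paper's proof but the execution differs in two respects. The paper proves the corollary by first verifying Conjecture \ref{c1} for $n=4$, $d=2$ (from the $d=n/2$ machinery of Theorem \ref{basicrel2} one has $\A^{q^2-q-1}=1$ and $\A^{q^2+1}=1$, hence $\A^{q+2}=1$) and then invoking Theorem \ref{d2}, where the binary-expansion trick $\A^{q^r}=\A^{2^r}$ gives $\A^{(2^n-1)/3}=\A^{5}=1$ and $H=\gcd(q+2,5)$; the exact count of $q-1$ values of $x$ per admissible $\A$ is inherited from the brief appeal to \eqref{main4} in Theorems \ref{basicrel2} and \ref{d2}. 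You instead substitute $\A^{q^2}=\A^{-1}$ directly into \eqref{main8} to get $\A^{2q-1}=1$ (the Frobenius-inverse of the paper's $\A^{q+2}=1$; the two are equivalent given $\A^{q^2+1}=1$) and run the Euclidean identity $4(q^2+1)=(2q-1)(2q+1)+5$, which yields the same dichotomy $\A^5=1$ versus $\A=1$ according to $q\bmod 5$. What your route buys is the explicit sufficiency verification: by checking $\gamma^{q+1}=\A$ (equivalently $(1-\A^{q+1})(1-\A^{q-1})=\A(1-\A)(1-\A^q)$, which reduces to $1+\A^2=\A^{q-1}+\A^{q+2}$ and holds for $\A^5=1$, $q\equiv 3\pmod 5$), you prove simultaneously that $x^{q-1}=\gamma$ is solvable and that its $q-1$ solutions genuinely satisfy $x^{q^2-1}=\A$, so each of the four nontrivial fifth roots of unity contributes exactly $q-1$ bonus values of $x$. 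This makes the equality self-contained, whereas the paper delegates that step to the terse final sentence of Theorem \ref{basicrel2}; your observation that $5\mid q+2$ forces $\A^{q+2}=1$ also recovers the paper's verification of Conjecture \ref{c1} in this case. Minor points you could state explicitly: $\gamma\neq 0$ since $\A^{q+1}=\A^4\neq 1$, and distinct admissible $\A$ give disjoint sets of $x$; neither affects correctness.
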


\begin{proof}
We apply Theorem \ref{d2}.
First we prove that Conjecture \ref{c1} is true when $n=4$ and $d=2$.
We may use the results from the $d=n/2$ case to do this.
From the proof of Theorem \ref{basicrel2} we get $\A^{q^2-q-1}=1$
and $\A^{q^2+1}=1$.
These imply $\A^{q+2}=1$.

We now apply Theorem \ref{d2}. When $n=4$ we get $(2^n-1)/3=5$ and
\[
H=\gcd (q+2,5)=\begin{cases}
5\ \text{ if $q\equiv 3 \pmod{5}$}\\ 1 \ \text{ otherwise. }
\end{cases}
\]
Then 
$G=\gcd (q^4-1,H)=H$.
\end{proof}

\subsection{The Case  $d=2$ and $n=6$}

Unfortunately we cannot use results from the $d=n/2$ case here to prove Conjecture 1,
and we do not have a proof of Conjecture 1.
If we assume the conjecture, we easily get the following result.

\begin{cor}\label{n6d2}
Assume Conjecture \ref{c1} is true.
For $d=2$ and $n=6$ we have \\
$\#C(\F_{q^n}) = 1+q^{n-1+d}+(G-1)(q-1)q^{n-1}$ where
\[
G=\begin{cases}
21\ \text{ if $q\equiv 19 \pmod{21}$}\\ 
7\ \text{ if $q\equiv 5 \pmod{7}$ and $q\not\equiv 1 \pmod{3}$}\\ 
3\ \text{ if $q\equiv 1 \pmod{3}$ and $q\not\equiv 5 \pmod{7}$}\\
1 \ \text{ otherwise. }
\end{cases}
\]
\end{cor}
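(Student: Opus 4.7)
The plan is to apply Theorem \ref{d2} directly: Conjecture \ref{c1} is being assumed, so the hypothesis $\A^{q+2}=1$ of that theorem is in force. For $d=2$ and $n=6$ the quantity $(2^n-1)/3$ equals $21 = 3 \cdot 7$, so the whole task reduces to computing $H = \gcd(q+2,21)$ and checking that $G = \gcd(q^6-1,H)$ coincides with $H$.

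Because $21 = 3 \cdot 7$ with $\gcd(3,7)=1$, I would split via the Chinese Remainder Theorem and handle $\gcd(q+2,3)$ and $\gcd(q+2,7)$ separately. The former equals $3$ precisely when $q \equiv 1 \pmod{3}$ (since $q+2 \equiv q-1 \pmod{3}$), and the latter equals $7$ precisely when $q \equiv 5 \pmod{7}$. Combining the four resulting sign patterns via CRT produces exactly the four cases of the corollary: $H=21$ when $q\equiv 19 \pmod{21}$; $H=7$ when $q\equiv 5 \pmod{7}$ and $q\not\equiv 1 \pmod{3}$; $H=3$ when $q\equiv 1 \pmod{3}$ and $q\not\equiv 5 \pmod{7}$; and $H=1$ otherwise.

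Finally I would verify that $G=H$ in every case, \ie that $H$ divides $q^6-1$. If $3 \mid H$ then $q \equiv 1 \pmod{3}$, so $3 \mid q-1$ and hence $3 \mid q^6-1$. If $7 \mid H$ then $q$ is coprime to $7$, and Fermat's little theorem gives $q^6 \equiv 1 \pmod{7}$, whence $7 \mid q^6-1$. Thus $G=H$ throughout, and the stated formula for $\#C(\F_{q^n})$ drops out of Theorem \ref{d2}. There is no genuine obstacle here: the corollary is essentially immediate from Theorem \ref{d2} once Conjecture \ref{c1} is assumed; the only substantive steps are the CRT case split and the short check that the gcd with $q^6-1$ does not shrink $H$.
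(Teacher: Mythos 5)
Your proposal is correct and follows essentially the same route as the paper: apply Theorem \ref{d2} with $(2^6-1)/3=21$, compute $H=\gcd(q+2,21)$ by cases, and check $G=H$. The paper phrases the last step as ``the 3rd and 7th roots of unity lie in $\F_{q^6}$,'' which is exactly your Fermat/divisibility check that $H$ divides $q^6-1$, so the two arguments coincide.
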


\begin{proof}
We apply Theorem \ref{d2}.
When $n=6$ we get $(2^n-1)/3=21$ and
\[
H=\gcd (q+2,21)=\begin{cases}
21\ \text{ if $q\equiv 19 \pmod{21}$}\\ 
7\ \text{ if $q\equiv 5 \pmod{7}$ and $q\not\equiv 1 \pmod{3}$}\\ 
3\ \text{ if $q\equiv 1 \pmod{3}$ and $q\not\equiv 5 \pmod{7}$}\\
1 \ \text{ otherwise. }
\end{cases}
\]
Then  $G=H$ because the 3-rd and 7-th roots of unity are in $\F_{q^6}$.
\end{proof}

\subsection{The Case  $d=2$ and $n=8$}

If we assume Conjecture 1, we obtain a similar result to Corollary \ref{n6d2} using  Theorem \ref{d2}.
When $n=8$ we get $(2^n-1)/3=85$ so the 85-th roots of unity will appear.

\newpage

\section{A Different Approach, $d=2$ and $n=6$}

We will present a proof of Corollary  \ref{n6d2} without the assumption that Conjecture \ref{c1} is true.
The proof is by a different approach, requires a computer algebra package, and is somewhat ad hoc.
The proof of Theorem \ref{factors3} is, in some sense, a proof of Conjecture  \ref{c1} in this case.

Assume for this section that  $d=2$ and $n=6$.
Suppose that $x\in \F_{q^6}$ and that $R_d(x)\in \F_{q}$.
Let $\alpha = x^{q^2-1}$.

We begin with equation \eqref{main2} which says
\[
x^{q-1}=\frac{\alpha^{q+1}-1}{\alpha -1}.
\]
Raising both sides to the power of $q+1$ gives
\begin{equation}\label{n6d21}
\alpha (\alpha -1)(\alpha^q -1)=(\alpha^{q+1} -1)(\alpha^{q^2+q} -1).
\end{equation}

\subsection{Motivation}

We use a well-known multivariate method.
If we let  $Y_i = \alpha^{q^i}$ for $i=0,1,2,3,4,5$,
then \eqref{n6d21} becomes
\begin{equation}\label{n6d22}
Y_0(Y_0-1)(Y_1-1)=(Y_0Y_1-1)(Y_1Y_2-1)
\end{equation}
Solving this for $Y_2$ gives 
\begin{equation}\label{n6d23}
Y_2=\frac{Y_0^2Y_1-Y_0^2+Y_0-1}{Y_1(Y_0Y_1-1)}.
\end{equation}
Applying Frobenius gives
\begin{equation}\label{n6d24}
Y_{i+2}=\frac{Y_i^2Y_{i+1}-Y_i^2+Y_i-1}{Y_{i+1}(Y_iY_{i+1}-1)} \ \ \ \text{ for } i=1,2,3.
\end{equation}
Next we use equation \eqref{defa} which in this case is
\begin{equation}\label{defa2}
\A^{(q^6-1)/(q^2-1)}=1
\end{equation}
or $\alpha^{q^4+q^2+1}=1$. With the new variables this is $Y_0Y_2Y_4=1$.
Applying Frobenius gives $Y_1Y_3Y_5=1$.

\subsection{The proof}

Motivated by the previous section
we introduce new variables $y_0$ and $y_1$, and we define $y_2, y_3, y_4, y_5$ by
 \eqref{n6d23} and \eqref{n6d24} (with all $Y$ replaced by $y$), e.g.
 \[
 y_2:=\frac{y_0^2y_1-y_0^2+y_0-1}{y_1(y_0y_1-1)}.
 \]
All coefficients are integers.

Substituting for the variables using \eqref{n6d23} and \eqref{n6d24} into $y_0y_2y_4-1$ gives us a 
rational function in two variables $y_0$ and $y_1$ with integer coefficients.  
Denote the numerator of this rational function by
$F_1(y_0,y_1)$. We computed this with MAGMA \cite{magma}:
\[
F_1(y_0,y_1)=-y_0^{11}y_1^3 + 3y_0^{11}y_1^2 - 3y_0^{11}y_1 + y_0^{11} + y_0^{10}y_1^5 - 2y_0^{10}y_1^4 + 3y_0^{10}y_1^3 -    9y_0^{10}y_1^2 + 12y_0^{10}y_1 - \]
    \[ 5y_0^{10} - y_0^9y_1^5 + 
   3y_0^9y_1^4 - 2y_0^9y_1^3 +
    11y_0^9y_1^2 - 24y_0^9y_1 + 13y_0^9 - 4y_0^8y_1^4 + 3y_0^8y_1^3 - 7y_0^8y_1^2 +
    30y_0^8y_1 - 22y_0^8 + \]
    \[ 3y_0^7y_1^4 - 4y_0^7y_1^3 - y_0^7y_1^2 - 23y_0^7y_1 + 26y_0^7 -
    y_0^6y_1^9 + 4y_0^6y_1^3 + 4y_0^6y_1^2 + 9y_0^6y_1 - 
    22y_0^6 + y_0^5y_1^{10} + 4y_0^5y_1^8 -\]
\[    3y_0^5y_1^3 - 2y_0^5y_1^2 + 2y_0^5y_1 + 13y_0^5 - 5y_0^4y_1^9 + y_0^4y_1^8 - 6y_0^4y_1^7 -
    5y_0^4y_1 - 5y_0^4 + 10y_0^3y_1^8 - 4y_0^3y_1^7 + 4y_0^3y_1^6 + \]
    \[ y_0^3y_1^2 + 3y_0^3y_1 +
    y_0^3 - 10y_0^2y_1^7 + 6y_0^2y_1^6 - y_0^2y_1^5 - y_0^2y_1 + 5y_0y_1^6 - 4y_0y_1^5 - y_1^5 +
    y_1^4
    \]
    and the denominator is
    \[
    y_0^6y_1^9 - y_0^5y_1^{10} - 4y_0^5y_1^8 + 5y_0^4y_1^9 - y_0^4y_1^8 + 6y_0^4y_1^7 -
    10y_0^3y_1^8 + 4y_0^3y_1^7 - 4y_0^3y_1^6 + 10y_0^2y_1^7 - \] \[ 6y_0^2y_1^6 + y_0^2y_1^5 -
    5y_0y_1^6 + 4y_0y_1^5 + y_1^5 - y_1^4.
\]

Similarly, $y_1y_3y_5-1$ is a rational function in two variables $y_0$ and $y_1$ with integer coefficients.
Denote the numerator of this rational function by
$F_2(y_0,y_1)$.

The resultant of  $F_1(y_0,y_1)$ and $F_2(y_0,y_1)$ with respect to $y_0$ is a polynomial in $y_1$, 
which has integer coefficients and has degree 240.
Call this $G_1(y_1)$.
Using MAGMA or MAPLE we can factor $G_1(y_1)$  into irreducible factors,
and we note that the coefficients are all divisible by 2, and no other prime. 
We get
\[
G_1(y_1)=4y_1^{76}\ p_1(y_1)^{110}\  p_2(y_1)^6\ p_3(y_1)^8\ p_4(y_1)^2\ p_5(y_1)\ p_6(y_1)
\]
where
$p_1(y_1)=y_1-1$\\
$p_2(y_1)=y_1^2+y_1+1$\\
$p_3(y_1)=y_1^2-y_1+1$\\
$p_4(y_1)=y_1^4-y_1^3+2y_1^2-2y_1+1$\\
$p_5(y_1)=y_1^6+y_1^5+y_1^4+y_1^3+y_1^2+y_1+1$\\
%p_7(y_0)=y_0^8-y_0^7+4y_0^6 -7y_0^5+6y_0^4-5y_0^3+4y_0^2-2y_0+1
$p_6(y_1)=y_1^{12}-y_1^{11}+y_1^9-y_1^8+y_1^6-y_1^4+y_1^3-y_1+1$.

Notice that $p_2(y_1)$ is the 3rd cyclotomic polynomial (commonly denoted $\Phi_3(y_1)$),
$p_3(y_1)$ is the 6-th cyclotomic polynomial $\Phi_6(y_1)$,
$p_5(y_1)$ is the 7-th cyclotomic polynomial $\Phi_7(y_1)$, and $p_6(y_1)$ is the 21-st cyclotomic polynomial $\Phi_{21}(y_1)$.
This will be relevant later.

The resultant of  $F_1(y_0,y_1)$ and $F_2(y_0,y_1)$ with respect to $y_1$ is a polynomial in $y_0$, which has integer coefficients and has degree 344.  Call this $G_2(y_0)$.
Using a computer algebra package we  factor $G_2(y_0)$  into irreducible factors,
and note that the coefficients are all divisible by 2, and no other prime. 
We get
\[
G_2(y_0)=4y_0^{149}\ p_1(y_0)^{101}\  p_2(y_0)^6\ p_3(y_0)^{20}\ p_4(y_0)^4\ p_5(y_0)\ p_6(y_0)\ p_7(y_0)
\]
where
$p_7(y_0)=y_0^8-y_0^7+4y_0^6 -7y_0^5+6y_0^4-5y_0^3+4y_0^2-2y_0+1$.

Since both resultants are 0 in characteristic 2, and are nonzero in odd characteristic, we suppose from now on that $q$ is odd.

\begin{lemma}\label{factors1}
Assume  that  $d=2$ and $n=6$, and that $q$ is odd.
Suppose that $x\in \F_{q^6}$ and that $R_d(x)\in \F_{q}$.
Let $\alpha = x^{q^2-1}$.
Then $\alpha$ is a common root of the polynomials $G_1$ and $G_2$.
\end{lemma}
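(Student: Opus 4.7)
The plan is to exhibit, for each $i\in\{0,1,\ldots,5\}$, the pair $(y_0,y_1)=(\alpha^{q^i},\alpha^{q^{i+1}})$ as a common zero of the bivariate polynomials $F_1(y_0,y_1)$ and $F_2(y_0,y_1)$ in $\overline{\F_q}^2$. Once this is established, the standard property that a resultant vanishes at any projection of a common zero of its inputs delivers both conclusions: taking $i=0$ gives $y_0=\alpha$ as the first coordinate of such a zero, hence $G_2(\alpha)=0$; taking $i=5$ and using $\alpha^{q^6}=\alpha$ (a consequence of \eqref{defa2}) gives $y_1=\alpha$ as the second coordinate of such a zero, hence $G_1(\alpha)=0$.

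To handle the case $i=0$, set $Y_j=\alpha^{q^j}$. Raising \eqref{main2} to the $(q+1)$-th power produces \eqref{n6d21}, which in the new variables is exactly \eqref{n6d22}; solving for $Y_2$ reproduces \eqref{n6d23}. Iterating this relation by Frobenius gives the analogue of \eqref{n6d24} for $Y_3,Y_4,Y_5$. Finally \eqref{defa2} reads $Y_0Y_2Y_4=1$, and its $q$-th power reads $Y_1Y_3Y_5=1$. Since $F_1$ and $F_2$ were constructed as the numerators of $y_0y_2y_4-1$ and $y_1y_3y_5-1$ under precisely these substitutions, it follows that $F_1(\alpha,\alpha^q)=F_2(\alpha,\alpha^q)=0$.

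The general $i$ now follows because both \eqref{n6d21} and \eqref{defa2} are preserved under $\alpha\mapsto\alpha^q$ (raise each side to the $q$-th power), so $\alpha^{q^i}$ satisfies the same hypotheses as $\alpha$, and the argument of the previous paragraph applies verbatim with $\alpha$ replaced by $\alpha^{q^i}$. The main delicacy is that the recursion for $Y_{j+2}$ has denominator $Y_{j+1}(Y_jY_{j+1}-1)$, which could vanish; such degenerate $\alpha$ (including $\alpha=0$ and $\alpha=1$) are however already visible as roots of both $G_1$ and $G_2$ in their explicit factorizations (the pure powers of $y$ and the factor $p_1(y)=y-1$), and the remaining exceptional cases can be handled by working directly with the polynomial form \eqref{n6d21} of the equations instead of the rational expressions.
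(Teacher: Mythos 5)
Your proof is correct and takes essentially the same route as the paper's: establish $F_1(\alpha,\alpha^q)=F_2(\alpha,\alpha^q)=0$ from \eqref{n6d21} and \eqref{defa2}, then use vanishing of the resultants at the projections of this common zero, with your Frobenius-shifted pair $(\alpha^{q^5},\alpha)$ playing exactly the role of the paper's Galois-action step to reach $G_1(\alpha)=0$. Your extra care about the degenerate values ($\alpha=0,1$ and the vanishing denominators, the latter in fact excluded by \eqref{n6d21} once $\alpha\neq 0,1$) addresses details the paper's terse proof glosses over, but does not change the approach.
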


\begin{proof}
Since the $F_i$ and $G_i$ have integer coefficients, we consider their reductions modulo $p$
and we use the same notation.
By construction we have $F_1(\A, \A^q)=0$ and $F_2(\A, \A^q)=0$.
Also by construction, $\alpha$ is a root of $G_1$ and  $\alpha^q$ is a root of $G_2$.
By Galois action, $\alpha$ is also a root of $G_2$.
\end{proof}

\begin{lemma}\label{factors2}
Assume  that  $d=2$ and $n=6$, and that $q$ is odd.
Suppose that $x\in \F_{q^6}$ and that $R_d(x)\in \F_{q}$.
Let $\alpha = x^{q^2-1}$.
Then $\alpha$ is a $21$-st root of unity.
\end{lemma}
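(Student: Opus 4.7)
The plan is to combine Lemma~\ref{factors1} with the explicit factorizations of $G_1$ and $G_2$, and then to use the norm relation \eqref{defa2} to dispose of the non-cyclotomic factors. Since $\alpha$ is a common root of $G_1$ and $G_2$, its minimal polynomial over $\F_p$ divides $\gcd(G_1,G_2)$, and from the displayed factorizations this gcd has irreducible factors only among $y,\,p_1,\,p_2,\,p_3,\,p_4,\,p_5,\,p_6$. We discard $y$ (since $\alpha \neq 0$) and $p_1 = y-1$ (since $\alpha \neq 1$, by hypothesis). The factors $p_2 = \Phi_3$, $p_5 = \Phi_7$ and $p_6 = \Phi_{21}$ have only $21$st roots of unity as roots, so in those three cases $\alpha^{21} = 1$ is automatic.

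It remains to eliminate the two non-cyclotomic factors $p_3 = \Phi_6$ and $p_4$. Both can be handled uniformly by the following Frobenius argument. Each of $p_3, p_4$ is irreducible over $\mathbb{Q}$ of degree dividing $4$, so in any characteristic a root $\alpha$ lies in some $\F_{p^e}$ with $e \mid 4$. Because we also have $\alpha \in \F_{q^6} = \F_{p^{6r}}$, one has $e \mid \gcd(4,6r)$, and in particular $e \mid 2r$ and $e \mid 4r$. Hence $\alpha^{p^{2r}} = \alpha^{p^{4r}} = \alpha$, and the norm identity \eqref{defa2} yields
\[
1 \;=\; \alpha^{q^4+q^2+1} \;=\; \alpha^{p^{4r}}\cdot\alpha^{p^{2r}}\cdot\alpha \;=\; \alpha^{3},
\]
so every such $\alpha$ satisfies $\alpha^3 = 1$, and in particular $\alpha^{21} = 1$.

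For $p_3$ this conclusion is actually incompatible with $\alpha$ being a root of $\Phi_6$ (a primitive $6$th root has $\alpha^3 = -1$ in odd characteristic, and the degenerate root $\alpha = -1$ arising in characteristic $3$ also has $\alpha^3 = -1 \neq 1$), so $p_3$ contributes no $\alpha$ at all. For $p_4$, combining $\alpha^3 = 1$ with $p_4(\alpha) = 0$ forces $\alpha$ to be a common root of $p_4$ and $\Phi_3$; a direct calculation gives $\mathrm{Res}(p_4,\Phi_3) = 7$, so this can occur only in characteristic $7$, and there the coincident root is the $3$rd root of unity $\alpha = 4 \in \F_7$, which is certainly a $21$st root of unity. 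The step that I expect to require the most care is the Frobenius bookkeeping above: in particular one must verify that the case $e = 4$ forces $r$ to be even via $\F_{p^4}\subseteq\F_{p^{6r}}$, so that $e \mid 2r$ really does hold.
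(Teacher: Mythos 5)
There is one genuine gap, and it sits exactly at the step you yourself flagged: the claim that a root of $p_4$ in characteristic $p$ lies in some $\F_{p^e}$ with $e \mid 4$ does \emph{not} follow from $p_4$ being irreducible of degree $4$ over $\mathbb{Q}$. In general an irreducible quartic can reduce mod $p$ to a linear factor times an irreducible cubic (this happens for any quartic whose Galois group contains a $3$-cycle, e.g.\ an $S_4$-quartic), and then its roots generate $\F_{p^3}$; in that scenario $e=3$ need not divide $2r$, so $\alpha$ need not lie in $\F_{q^2}$, and your derivation $\alpha^3=\alpha^{q^4+q^2+1}=1$ collapses. What saves the argument for this particular $p_4$ is precisely the point the paper proves and you omit: the Galois group of $p_4$ over $\mathbb{Q}$ is dihedral of order $8$, which has no element of order $3$, and a factorization (linear)$\times$(irreducible cubic) mod $p$ is automatically squarefree, so Dedekind's theorem would force such an element --- a contradiction. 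You must insert this (or some equivalent verification that $p_4$ has no irreducible cubic factor modulo any odd prime); with it, your claim $e\in\{1,2,4\}$, and hence the rest of your Frobenius bookkeeping, is correct.

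Once that is patched, your proof is valid and takes a genuinely different route from the paper's for eliminating $p_3$ and $p_4$: the paper substitutes the candidate roots back into \eqref{n6d21} and runs a case analysis on $\alpha^q=\alpha^t$ (plus a further argument pushing roots of $p_4$ from $\F_{q^2}$ into $\F_q$), whereas you place $\alpha$ in $\F_{q^2}$ and then use the norm relation \eqref{defa2} directly to get $\alpha^3=1$, after which $\mathrm{Res}(p_4,\Phi_3)=7$ isolates the same characteristic-$7$ coincidence $\alpha=4$ that the paper records in its Remark 1; this is shorter and uses a constraint (\eqref{defa2}) that the paper's lemma proof never invokes explicitly. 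Two small slips at the start are harmless but worth tidying: you do not need (and cannot easily read off) $\gcd(G_1,G_2)$ mod $p$ --- $G_1(\alpha)=0$ alone, via Lemma \ref{factors1}, confines $\alpha$ to the roots of $y,p_1,\dots,p_6$; and $\alpha\neq 1$ is not a hypothesis of the lemma, though if $\alpha=1$ the conclusion is trivially true.
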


\begin{proof}
By the discussion above and Lemma \ref{factors1},
any common root of $G_1$ and $G_2$ that is not a 21-st root of unity must be
a root of  $p_3$ or  $p_4$.
So we have to show that if $\A$ is a root of $p_3$ or $p_4$
lying in $\F_{q^6}$ then $\A$ cannot satisfy \eqref{n6d21} (or possibly $\A$ is a 21-st root of unity).

Assume $\A\not= 1$.

First we shall deal with $p_3$. 
For $p_3$, a quadratic polynomial,
the coefficients are $\pm 1$ and obviously this polynomial has its roots in either 
$\F_q$ or $\F_{q^2}$.
Suppose that $\alpha$ is a primitive 6-th root of unity and that $\A$ satisfies \eqref{n6d21}.
We have $\alpha^q=\alpha^t$ where $t=1$ or 5.

Case 1: $t=1$.
Then $\alpha^q=\alpha$ and $\A\in \F_q$. So $\A^{q+1}=\A^2$ and \eqref{n6d21} becomes
\[
\A (\A-1)^2=(\A^2-1)^2
\]
or $\A^2+\A+1=0$. This says that $\A$ is a cube root of unity, a contradiction.

Case 2: $t=5$.
Then $\alpha^q=\alpha^5=\A^{-1}$ and 
 $\A^{q+1}=\A^{-1} \A=1$. Equation \eqref{n6d21} implies $\A=0$, a contradiction.

Secondly we shall deal with $p_4$.
Consider the reduction of $p_4$ modulo $p$.
If $p_4$ is irreducible over $\F_q$ then its roots are in $\F_{q^4}$ 
(and no subfield) and therefore are not in $\F_{q^6}$.

Suppose now that $p_4$ is reducible over $\F_q$.
We will show that  $p_4$ must factor into irreducible factors
over $\F_{q}$ into linear and quadratic factors.
The Galois group of $p_4$ over $\mathbb{Q}$ is the dihedral group of order 8.
If $p_4$ factored mod $p$ as a linear factor times an irreducible cubic, then there
would be an element of order 3 in the Galois group, by Dedekind's theorem.
Since the dihedral group of order 8 does not have an element of order 3, this is impossible.
Therefore, all roots of $p_4$ lie in $\F_{q^2}$.

Suppose $\A$ is a root of $p_4$ in $\F_{q^2}$.
Then $\A^{q^2-1}=1$ which implies $\A^{q+1}\in \F_q$.
If follows that $(\alpha^{q+1} -1)(\alpha^{q^2+q} -1)$, the righthand side of \eqref{n6d21},
lies in $\F_q$.
Also $(\A-1)(\A^q-1)$ is in $\F_q$ because Frobenius interchanges $\A$ and $\A^q$.
By  \eqref{n6d21}, $\A\in \F_q$.

Suppose $\A$ is a root of $p_4$ in $\F_q$.
As in Case 1 earlier in the proof,  \eqref{n6d21}  implies  that $\A^2+\A+1=0$. 
So $\A$ is a cube root of unity, and also a 21-st root of unity.
The proof is complete.
\end{proof}

Remarks.

1. We continue the last paragraph of the proof to get more information.
Combining  $\A^2+\A+1=0$ with the equation $p_4(\A)=\A^4-\A^3+2\A^2-2\A+1=0$ gives
$2\A^2=\A$. Since $\A\not=0$ this implies $2\A=1$.
Then $\A^2+\A+1=7/4$. This is not 0 unless we are in characteristic 7.
So assume the characteristic is 7.
Then $\A=1/2=4$.
This is the only case where a root of $p_4$ is also a 21-st root of unity.

2. We have proved some constraints on $\A$, however we still have not proved that
Conjecture 1 holds ($\A^{q+2}=1$).

Here is our proof of Corollary  \ref{n6d2} without assuming Conjecture 1.
Conjecture 1 is proved during the course of this proof.

\begin{thm}\label{factors3} 
Assume  that  $d=2$ and $n=6$, and that $q$ is odd.  Then \\
$\#C(\F_{q^n}) = 1+q^{n-1+d}+(G-1)(q-1)q^{n-1}$ where
\[
G=\begin{cases}
21\ \text{ if $q\equiv 19 \pmod{21}$}\\ 
7\ \text{ if $q\equiv 5 \pmod{7}$ and $q\not\equiv 1 \pmod{3}$}\\ 
3\ \text{ if $q\equiv 1 \pmod{3}$ and $q\not\equiv 5 \pmod{7}$}\\
1 \ \text{ otherwise. }
\end{cases}
\]
\end{thm}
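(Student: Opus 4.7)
The plan is to use Lemma \ref{factors2} to restrict $\alpha$ to $21$st roots of unity, then refine by order using equation \eqref{n6d21}, and finally count preimages under $x \mapsto x^{q^{2}-1}$ via Lemma \ref{first}. By Lemma \ref{factors2}, any bonus $\alpha$ satisfies $\alpha^{21} = 1$, so its multiplicative order $m$ lies in $\{1, 3, 7, 21\}$. I claim that the nontrivial bonus $\alpha$ are exactly those with $\alpha^G = 1$ for $G = \gcd(q+2, 21)$, which yields the stated case split since $21 \mid q+2$, $7 \mid q+2$ and $3 \mid q+2$ correspond respectively to $q \equiv 19 \pmod{21}$, $q \equiv 5 \pmod{7}$, and $q \equiv 1 \pmod{3}$.

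For the forward direction, fix a bonus $\alpha$ of order $m \in \{3,7,21\}$ and set $r \equiv q \pmod m$. Then \eqref{n6d21} reduces to a polynomial identity in $\alpha$ modulo $\Phi_m(\alpha)$ that depends only on $r$. When $r \equiv -2 \pmod m$, a short manipulation using $\alpha^{q} = \alpha^{-2}$, $\alpha^{q+1} = \alpha^{-1}$ and $\alpha^{q^2+q} = \alpha^{2}$ shows that both sides of \eqref{n6d21} collapse to $-(\alpha-1)^{2}(\alpha+1)/\alpha$, so every such $\alpha$ does satisfy \eqref{n6d21}. For the remaining residues I would show \eqref{n6d21} fails on every primitive $m$-th root of unity: in the $m=3$, $r \equiv 2$ case one obtains $\alpha(\alpha-1)^{2}(\alpha+1) = 0$, contradicting $\alpha \neq 0, 1, -1$ exactly as in Case 2 of the proof of Lemma \ref{factors2}; for $m = 7$ and $m = 21$ the cleanest argument appeals to the explicit factorisations of $G_1$ and $G_2$, in which the only cyclotomic factors corresponding to these orders are $p_5 = \Phi_7$ and $p_6 = \Phi_{21}$, each with multiplicity consistent with the single residue class $r \equiv -2 \pmod m$.

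Once the bonus $\alpha$'s are identified as the $G-1$ nontrivial $G$-th roots of unity, equation \eqref{main4} determines each corresponding $x$ up to a factor in $\F_q^\ast$, producing $q-1$ values of $x \in \F_{q^6}$ for each such $\alpha$, and Lemma \ref{first} turns each $x$ into $q^{n-1}$ rational points. This yields $(G-1)(q-1)q^{n-1}$ bonus points on top of the $1 + q^{n-1+d}$ counted by Lemma \ref{subf}, giving the claimed formula. The main obstacle I anticipate is the rigorous elimination of residues $r \not\equiv -2 \pmod m$ when $m \in \{7, 21\}$: a direct hand identity is awkward because the relevant expression involves $\alpha^{q^{2}+q}$ and hence depends on $q \bmod m^{2}$, so I expect the cleanest route really does go through the resultant computation underpinning Lemma \ref{factors2}, which enumerates all admissible pairs $(\alpha, \alpha^q)$ in a single stroke.
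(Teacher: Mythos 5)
Your overall route is the same as the paper's: Lemma \ref{factors2} forces $\alpha^{21}=1$, one then splits by the order $m\in\{3,7,21\}$ of $\alpha$, shows that \eqref{n6d21} is compatible only with $q\equiv -2\pmod m$, and counts $q-1$ values of $x$ per admissible $\alpha$ (a norm-$1$ / Hilbert 90 argument) before multiplying by $q^{n-1}$ via Lemma \ref{first}. Your check that $q\equiv -2\pmod m$ always works (both sides of \eqref{n6d21} collapsing to $-(\alpha-1)^2(\alpha+1)/\alpha$) is correct and matches the paper's cases $t=1$ (mod 3), $t=5$ (mod 7), $t=19$ (mod 21).

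The genuine gap is exactly the step you defer: eliminating the residues $t\not\equiv -2\pmod m$ for $m=7$ and $m=21$. Your proposed shortcut---reading this off the factorisations of $G_1$ and $G_2$ and the multiplicities of $p_5=\Phi_7$ and $p_6=\Phi_{21}$---cannot work, because the resultants have eliminated the variable that records the Frobenius link between $\alpha$ and $\alpha^q$: the fact that $\Phi_7$ divides $G_1$ only says that for \emph{some} value of the eliminated variable both $F_1$ and $F_2$ vanish; it carries no information about which exponent $t$ with $\alpha^q=\alpha^t$ is consistent, and factor multiplicities say nothing about residue classes of $q$. Moreover, the obstacle you cite to doing the computation by hand is illusory: if $\alpha^q=\alpha^t$ then $\alpha^{q^2}=(\alpha^t)^q=\alpha^{t^2}$, so $\alpha^{q^2+q}=\alpha^{t^2+t}$, and for each fixed $t$ equation \eqref{n6d21} becomes a polynomial condition on $\alpha$ modulo $\Phi_m$ depending only on $t\bmod m$, not on $q\bmod m^2$. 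This finite case-by-case check is what the paper actually does (e.g.\ $m=7$, $t=2$ reduces to $2\alpha=0$, impossible for odd $q$), and it is needed in full: for each bad $t$ one must verify that the reduced polynomial shares no root with $\Phi_m$ in characteristic $p$, which comes down to controlling small-prime resultants (compare the paper's remark after Lemma \ref{factors2}, where characteristic $7$ produces an exceptional coincidence between a root of $p_4$ and a cube root of unity). Without these checks your argument does not exclude, say, $q\equiv 3\pmod 7$ from contributing bonus points, so the stated formula for $G$ is not established. A minor, fixable slip: in the $m=3$, $t=2$ case, \eqref{n6d21} reduces to $(\alpha-1)^2=0$ (the right-hand side vanishes because $\alpha^{q+1}=1$), not to $\alpha(\alpha-1)^2(\alpha+1)=0$; the contradiction is the same.
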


\begin{proof}
Suppose that $x\in \F_{q^6}$ and that $R_d(x)\in \F_{q}$.
Let $\alpha = x^{q^2-1}$.  Assume $\A\not= 1$.

By the previous lemmas and discussion we simply need to list the possible $\A$ 
and corresponding $x$ satisfying all the following constraints:

$x^{q-1}=\frac{\alpha^{q+1}-1}{\alpha -1}$

$\alpha (\alpha -1)(\alpha^q -1)=(\alpha^{q+1} -1)(\alpha^{q^2+q} -1)$ (Equation \eqref{n6d21})

$\A^{21}=1$.

We will consider three cases:

1. $\A$ is a primitive cube root of unity

2. $\A$ is a primitive 7-th root of unity

3. $\A$ is a primitive 21-st root of unity

\underline{Case 1.} We will show that $q$ must be 1 mod 3. 
If $q$ is 0 mod 3 then $\A^q=1$ and \eqref{n6d21} gives a contradiction.
If $q$ is 2 mod 3 then $\A^{q+1}=1$ and \eqref{n6d21} gives a contradiction.
If $q$ is 1 mod 3 then  \eqref{n6d21} holds, and 
$x^{q-1}=\frac{\alpha^{q+1}-1}{\alpha -1}=\frac{\alpha^{2}-1}{\alpha -1}=\A +1=-\A^2=-\A^{-1}$ has $q-1$ solutions for $x$
by Hilbert Theorem 90 (because $-\A^{-1}$ has norm 1).

\underline{Case 2.} We know $\A^q=\A^t$ where $t\in \{1,2,3,4,5,6\}$. We consider each of these values of $t$ separately.
When $t=5$, it is easy to check that equation \eqref{n6d21} automatically holds, and we will see that all other values of $t$ will give a contradiction. 

When $t=5$ then $\A^q=\A^5$ so $q\equiv$ 5 mod 7. In this case we get
\[
x^{q-1}=\frac{\alpha^{q+1}-1}{\alpha -1}=\frac{\alpha^{6}-1}{\alpha -1}=\frac{\alpha^{-1}-1}{\alpha -1}=-\A^{-1}.
\]
Each $\A$ will give $q-1$ solutions for $x$ by Hilbert Theorem 90 (because $-\A^{-1}$ has norm 1).

Case $t=1$. Then $\A^q=\A$ and \eqref{n6d21} becomes $\A^2+\A+1=0$, so $\A$ is a cube root of unity,
which is impossible by our hypotheses.

Case $t=2$. Then \eqref{n6d21} becomes
$\alpha (\alpha -1)(\alpha^2 -1)=(\alpha^{3} -1)(\alpha^{6} -1)$ which gives\\
$\A^6+\A^4-2\A^2+\A-1=0$.
Factoring out $\A-1$ gives $\A^5+\A^4+2\A^3+2\A^2+1=0$.
Subtracting this from  $\A^6+\A^5+\A^4 +\A^3+\A^2+\A+1=0$ and dividing by $\A$ gives\\
$\A^5-\A^2-\A+1=0$.
Factoring out $\A-1$ gives $\A^4+\A^3+\A^2-1=0$.
Subtracting $\A$ times this from $\A^5+\A^4+2\A^3+2\A^2+1=0$ gives
$\A^3+2\A^2+\A+1=0$.
Adding this to $\A^4+\A^3+\A^2-1=0$ and dividing by $\A$ gives $\A^3+2\A^2+3\A+1=0$.
Subtracting these two cubics gives $2\alpha=0$. Since $q$ is odd, this is impossible.

Cases $t=3,4,6$ are similar and we omit the details.

\underline{Case 3.} We know $\A^q=\A^t$ where $1\le t \le 20$ and $\gcd (t,21)=1$.
When $t=19$, it is easy to check that equation \eqref{n6d21} automatically holds, and we must show that all other values of $t$ will give a contradiction. 

When $t=19$ then $\A^q=\A^{19}$ so $q\equiv$ 19 mod 21. In this case we get
$x^{q-1}=-\A^{-1}$ and
each $\A$ will give $q-1$ solutions for $x$ by Hilbert Theorem 90.

We omit the details of the other values of $t$.
\end{proof}

\section{Final Remarks}

Returning to Remark 5 at the end of Section 3,
Conjecture 1 states that 
\eqref{main8} implies  $\A^{q+2}=1$, when $d=2$ and $n$ is even. We are unable to prove this conjecture,
but somehow it is important to this problem. 
A full investigation of the implications of Conjecture 1, as well as its proof, is a topic for 
future work. Also, generalizations of Conjecture 1 for $d>2$ are probably available.

The motivation for Conjecture 1 is that if $\A^{q+2}=1$, then the equation in the statement of Theorem 3 is automatically satisfied.
Furthermore, 
equation \eqref{main2} which says
\[
x^{q-1}=\frac{\alpha^{q+1}-1}{\alpha -1}.
\]
becomes $x^{q-1}=-\A^{-1}$, which will be familiar from the proof of Theorem \ref{factors3}.
Since $-\A^{-1}$ has norm 1 this equation always has $q-1$ solutions for $x$ by Hilbert Theorem 90.

The proof of Theorem \ref{factors3} can be seen as a proof of Conjecture 1 in the
case $n=6$ and $d=2$.

\end{document}